\tikzset{
  symbol/.style={
    draw=none,
    every to/.append style={
      edge node={node [sloped, allow upside down, auto=false]{$#1$}}}
  }
}
\newtheorem{thm}{Theorem}[section]
\newtheorem{prop}[thm]{Proposition}
\newtheorem{cor}[thm]{Corollary}
\newtheorem*{thm*}{Theorem}
\newtheorem{defn}[thm]{Definition}
\newtheorem*{definition*}{Definition}
\newtheorem{remark}[thm]{Remark}
\numberwithin{equation}{section}
\newcommand{\cont}{{\mathrm{C} }}
\newcommand{\contb}{{\cont}_{\mathrm b}}
\newcommand{\contX}{\cont(X)}
\newcommand{\contbX}{\contb(X)}
\newcommand{\contY}{\cont(Y)}
\newcommand{\contK}{\cont(K)}
\newcommand{\measures}{\mathrm{M}}
\newcommand{\measuresc}{\mathrm{M}_\mathrm{c}}
\newcommand{\vlat}[1]{{#1}}
\newcommand{\borel}[1]{\mathfrak{B}_{#1}}
\newcommand{\di}{\,\mathrm{d}}
\newcommand{\orderdual}[1]{{#1}^\sim}
\newcommand{\ordercontn}[1]{{#1}^\sim_{\mathrm n}}
\newcommand{\ordercontnbidual}[1]{{#1}^{\sim \sim}_{\mathrm{n n}}}
\newcommand{\bidual}[1]{{#1}^{\ast\ast}}
\newcommand{\obidual}[1]{{#1}^{\sim\sim}}
\newcommand{\onefunction}{{\mathbf 1}}
\newcommand{\R}{\mathbb{R}}
\newcommand{\cat}{{\bf C}}
\newcommand{\vlc}{{\bf VL}}
\newcommand{\nvlc}{{\bf NVL}}
\newcommand{\vlic}{{\bf IVL}}
\newcommand{\topc}{{\bf TOP}}
\newcommand{\preann}[1]{\prescript{\circ}{}{#1}}
\newcommand{\defeq}{\ensuremath{\mathop{:}\!\!=}}
\newcommand{\proj}[1]{\underleftarrow{\lim}\hspace{1.5pt} #1 }
\newcommand{\ind}[1]{\underrightarrow{\lim}\hspace{1.5pt} #1 }
\newcommand{\cal}[1]{\mathcal{#1}}
\newcommand{\lbox}{\hspace{1.5pt}\raisebox{0.4pt}{\scaleto{\square}{3.6pt}}\hspace{1.5pt}}
\newcommand*\bigcdot{\mathpalette\bigcdot@{.5}}
\newcommand*\bigcdot@[2]{\mathbin{\vcenter{\hbox{\scalebox{#2}{$\m@th#1\bullet$}}}}}
\begin{document}

\title[The order bidual of $\boldsymbol{\contX}$]{The order bidual of $\boldsymbol{\contX}$ for a realcompact space}

\author{Marcel de Jeu} \address{Mathematical Institute, Leiden University, P.O.\ Box 9512, 2300 RA Leiden, the Netherlands; and Department of Mathematics and Applied Mathematics, University of Pretoria, Cor\-ner of Lynnwood Road and Roper Street,
Hatfield 0083, Pretoria, South Africa }
\email{mdejeu@math.leidenuniv.nl}

\author{Jan Harm van der Walt} \address{Department of Mathematics and Applied Mathematics, University of Pretoria, Cor\-ner of Lynnwood Road and Roper Street,
Hatfield 0083, Pretoria, South Africa}
\email{janharm.vanderwalt@up.ac.za}

\thanks{The second author was supported by the NRF of South Africa, grant number 115047.  The results in this paper were obtained, in part, while the second author visited Leiden University from September 2021 to January 2022.  He thanks the Mathematical Institute of Leiden University for their hospitality.  The visit was funded through the European Union Erasmus+ ICM programme.}

\subjclass[2010]{Primary 46E05; Secondary 46A40, 46M40}

\date{\tt {\today}}



\keywords{Vector lattices, continuous functions, biduals, direct limits, inverse limits}

\begin{abstract}
It is well known that the bidual of $\mathrm C(X)$ for a compact space $X$, supplied with the Arens product, is isometrically isomorphic as a Banach algebra to $\mathrm C(\tilde X)$ for some compact space $\tilde X$. The space $\tilde X$ is unique up to homeomorphism. We establish a similar result for realcompact spaces: The order bidual of $\mathrm C(X)$ for a realcompact space $X$, when supplied with the Arens product, is isomorphic as an $f$-algebra to $\mathrm C(\tilde X)$ for some realcompact space $\tilde X$. The space $\tilde X$ is unique up to homeomorphism.
\end{abstract}

\maketitle

\section{Introduction and preliminaries}\label{Section:  Introduction and preliminaries}

Let $K$ be a compact Hausdorff space and denote by $\contK$ the Banach lattice of real-valued continuous functions on $K$.  There exists a (unique up to homeomorphism) compact Hausdorff space $\tilde{K}$ so that $\bidual{\contK}$, the norm bidual of $\contK$, is isometrically lattice isomorphic to $\cont(\tilde{K})$.  This can be seen in various ways.  For instance, invoking two theorems of Kakutani, \cite[Theorems 2 and 15]{Kakutani1941} gives a short proof:  $\contK$ is an AM-space so that $\contK^\ast$ is an AL-space, from which it follows that $\bidual{\contK}$ is a unital AM-space and so is isometrically lattice isomorphic to $\cont(\tilde{K})$ for some compact Hausdorff space $\tilde{K}$.  The uniqueness of $\tilde{K}$ follows from the Banach-Stone Theorem.  Following Dales et al \cite{DalesDashiellLauStrass2016}, we call the space $\tilde{K}$ the \emph{hyper-Stonean envelope} of $K$.

The space $\bidual{\contK}$ is a (real) commutative and unital Banach algebra in\textemdash at first sight\textemdash more than one way.  Pointwise multiplication of functions in $\cont(\tilde{K})$ is one way.  Another is through the Arens products \cite{Arens1951_2,Arens1951_1}; see also \cite[\S 3.1]{DalesDashiellLauStrass2016} for a recent presentation.  We briefly recall the details.

For $\varphi \in \contK^\ast$ and $u\in \contK$,  define $u\cdot \varphi\in \contK^\ast$ by setting
\[
(u \cdot \varphi)(v) \defeq \varphi(uv),~ v\in\contK.
\]
Next, for $\varphi \in \contK^\ast$ and $\Phi \in \bidual{\contK}$, define $\varphi\cdot\Phi \in \contK^\ast$ as
\[
(\varphi\cdot \Phi)(u) \defeq \Phi( u\cdot\varphi),~ u\in\contK.
\]
Finally, for $\Phi,\Psi\in \bidual{\contK}$, we define $\Phi \lbox  \Psi, \Phi \diamond \Psi \in \bidual{\contK}$ as
\[
(\Phi \lbox \Psi)(\varphi) \defeq \Phi(\varphi \cdot \Psi),~ \varphi\in \contK^\ast
\]
and
\[
(\Phi \diamond \Psi)(\varphi) \defeq \Psi(\varphi \cdot \Phi),~ \varphi\in \contK^\ast.
\]

Arens showed in \cite{Arens1951_1} that $\bidual{\contK}$ is a Banach algebra with respect to both $\lbox$ and $\diamond$.  In fact, $\Phi \diamond \Psi = \Phi \lbox \Psi$ for all $\Phi, \Psi \in \bidual{\contK}$, and $(\bidual{\contK},\diamond)$ is a commutative Banach algebra\footnote{An elementary proof of all of this can be found in \cite[Theorem 4.5.5]{DalesDashiellLauStrass2016}. It is true in general that the two Arens products on the bidual of a real $\text{C}^\ast$-algebra, so on the bidual of $\contK$ in particular, coincide; see \cite[Theorem~5.5.3]{Li2003}.}.  Furthermore, $\cont(\tilde{K})$ and $(\bidual{\contK},\diamond)$ are isometrically isomorphic Banach algebras; see for instance \cite[Theorems 5.4.1 and 6.5.1]{DalesDashiellLauStrass2016}.  It follows from \cite[1.6]{GillmanJerison1960} that the isometric algebra isomorphism $\bidual{\contK}\mapsto \cont(\tilde{K})$ is also a lattice isomorphism.  We also note that the canonical isometric embedding $\sigma_K:\contK\to \cont(\tilde{K})$ is both a lattice and an algebra isomorphism onto its range, and $\sigma_K(\onefunction_K)=\onefunction_{\tilde{K}}$.

The following result summarizes the above, and gives some information about the relationship between $K$ and $\tilde{K}$. In it, we denote by $\measures(K)$ the dual of $\contK$ for a compact Hausdorff space $K$, which can be identified with the Radon measures on $K$.  We let $\delta_x$ denote the point measure corresponding to a point $x$ in $K$. We refer the reader to \cite[pp.\ 202--203]{DalesDashiellLauStrass2016} for further details.

\begin{thm*}
Let $K$ be a compact Hausdorff space.
\begin{enumerate}
    \item[(i)] $(\bidual{\contK},\diamond)$ is a unital and commutative Banach lattice algebra.
    \item[(ii)] There exists a unique compact Hausdorff space $\tilde{K}$ so that $\bidual{\contK}$ is isometrically lattice and algebra isomorphic to $\cont(\tilde{K})$.
    \item[(iii)] $\sigma_K:\contK\to \bidual{\contK}$ is an algebra and lattice embedding, and $\sigma_K(\onefunction_K)=\onefunction_{\tilde{K}}$.
    \item[(iv)] There exists a unique continuous surjection $\pi_K:\tilde{K}\to K$ so that $\sigma_K(u) = u\circ \pi_K$ for all $u\in \contK$.
    \item[(v)] $\sigma_K[\contK]=\{u\in\cont(\tilde{K}) \!\!~:~\!\! \text{for every } \tilde{x}\in\tilde{K},~ u \text{ is constant on } \pi_K^{-1}(x)\}$.
    \item[(vi)] Identify $K$ with $\{\delta_x ~:~x\in K\} \subseteq \measures(K)\}$, and $\tilde{K}$ with $\{\delta_{\tilde{x}} ~:~\tilde{x}\in \tilde{K}\}\subseteq \measures(\tilde{K})$.  The topologies on $K$ and $\tilde{K}$, respectively, are the subspace topologies with respect to the weak$^*$-topologies $\sigma(\measures(K),\contK)$ and $\sigma(\measures(\tilde{K}),\cont(\tilde{K}))$, respectively.
    \item[(vii)]  The canonical embedding $\omega:\measures(K)\to \measures(\tilde{K})$ of $\measures(K)$ into its second dual maps $K$ onto the set of isolated points in $\tilde{K}$.
\end{enumerate}
\end{thm*}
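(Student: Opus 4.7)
The plan is to assemble parts (i) and (ii) directly from the Kakutani/Arens/Banach-Stone chain already recalled in the introduction: since $\contK$ is a unital AM-space, $\contK^\ast$ is an AL-space and $\bidual{\contK}$ is a unital AM-space, hence isometrically lattice isomorphic to $\cont(\tilde K)$ for some compact Hausdorff space $\tilde K$; combining this with the coincidence of the two Arens products and the algebra isomorphism statement from \cite[Theorems 5.4.1 and 6.5.1]{DalesDashiellLauStrass2016}, one promotes this to an isometric lattice and algebra isomorphism. Uniqueness of $\tilde K$ follows from the Banach-Stone theorem combined with \cite[1.6]{GillmanJerison1960} (so that a compact Hausdorff space is determined by its $\cont$-algebra up to homeomorphism).

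For (iii), the canonical embedding $\sigma_K:\contK\to \bidual{\contK}$ is an isometry by Hahn-Banach; that it is a lattice homomorphism is routine (it is positive and preserves suprema because $u\vee v$ is characterised in the AM-lattice $\contK$ by the same integral inequalities that characterise it in the bidual), and $\sigma_K(uv) = \sigma_K(u)\diamond \sigma_K(v)$ follows directly from the three-step definition of the Arens product unwound on elements coming from $\contK$. That $\sigma_K(\onefunction_K) = \onefunction_{\tilde K}$ is immediate once one identifies the unit of $\bidual{\contK}$ (namely, evaluation-at-$\onefunction_K$ composed with... rather, the element acting as multiplicative unit in the Arens product) with the constant function $\onefunction_{\tilde K}$ under the isomorphism of (ii).

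For (iv) and (v), composing the isomorphism of (ii) with $\sigma_K$ yields a unital isometric lattice-and-algebra embedding $\iota:\contK \hookrightarrow \cont(\tilde K)$. Standard Gelfand/Banach-Stone duality produces a unique continuous map $\pi_K:\tilde K \to K$ with $\iota(u) = u\circ \pi_K$ for every $u\in\contK$; injectivity of $\iota$ forces $\pi_K[\tilde K]$ to separate points of $K$ (it cannot miss a point $x$ without causing some nonzero $u \in \contK$ supported near $x$ to pull back to $0$), and since $\pi_K[\tilde K]$ is compact, it must equal $K$, so $\pi_K$ is surjective. For (v), inclusion $\subseteq$ is then clear, and for $\supseteq$ a function $v\in\cont(\tilde K)$ that is constant on each fiber of $\pi_K$ descends to a well-defined function $u:K\to \R$; continuity of $u$ follows from the fact that $\pi_K$ is a continuous surjection between compact Hausdorff spaces and is therefore a quotient map, so $v = u\circ \pi_K \in \iota[\contK]$.

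Part (vi) is the standard identification: the weak-$\ast$ topology on $\{\delta_x : x\in K\}$ pulled through $u \mapsto \varphi \mapsto \varphi(u)$ coincides with the original topology on $K$, since the evaluation maps $\delta_x \mapsto u(x)$ for $u\in\contK$ separate points, and $K$ is already compact Hausdorff; the same holds for $\tilde K$. Finally for (vii), a point $\tilde x \in \tilde K$ is isolated iff $\chi_{\{\tilde x\}} \in \cont(\tilde K)$, equivalently, under the isomorphism of (ii), iff $\chi_{\{\tilde x\}}$ corresponds to a nonzero idempotent in $\bidual{\contK}$ dominated by a minimal such idempotent; dually, this corresponds to an atom of the AL-space $\measures(K) = \contK^\ast$. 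The atoms of $\measures(K)$ are exactly the multiples of the Dirac measures $\delta_x$, $x\in K$, and tracking through the embedding $\omega$ of $\measures(K)$ into its bidual $\measures(\tilde K)$ one sees that these atoms correspond precisely to $\{\delta_{\tilde x} : \tilde x \text{ isolated}\}$, as required.

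The only nontrivial bookkeeping will be verifying that the several avatars of $\tilde K$ (Kakutani's representing space for the AM-bidual, the character space of the commutative unital Banach algebra $(\bidual{\contK},\diamond)$, and the index set of atoms in $\measures(\tilde K)$) are consistently identified, so that the statements in (iv)--(vii) refer to the same space. Once this identification is pinned down, the remaining arguments are formal consequences of Gelfand/Banach-Stone duality and the elementary theory of AL-atoms.
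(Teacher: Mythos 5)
Your treatment of parts (i)--(vi) assembles essentially the same chain of standard results that the paper itself relies on (Kakutani's AM/AL duality, the coincidence and commutativity of the Arens products, Banach--Stone, and \cite[1.6]{GillmanJerison1960}); the paper offers no independent proof of these items either, deferring to \cite{DalesDashiellLauStrass2016}, and your sketch of how the pieces fit together is sound, modulo small slips of phrasing (in (iv) what injectivity of $\iota$ gives is density, hence equality, of the compact image $\pi_K[\tilde K]$ in $K$, not that it ``separates points'').

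Part (vii), however, has a genuine gap. You characterize the isolated points of $\tilde K$ via minimal idempotents of $\cont(\tilde K)$ and then assert that ``dually, this corresponds to an atom of the AL-space $\measures(K)$''. Atomicity alone cannot do this job: \emph{every} point evaluation $\delta_{\tilde x}$, isolated or not, is an atom of $\measures(\tilde K)=\cont(\tilde K)^\ast$, so an argument that only tracks atoms through the dualities would wrongly place all of $\tilde K$ in the image of $\omega$. The property that actually singles out the isolated points is order continuity (normality): $\delta_{\tilde x}$ is an order continuous functional on $\cont(\tilde K)$ if and only if $\tilde x$ is isolated, and since the AL-space $\measures(K)$ is perfect one has $\omega[\measures(K)]=\ordercontn{\cont(\tilde K)}$; combining these two facts with the normalization $\omega(\delta_x)(\onefunction_{\tilde K})=1$ is what identifies $\omega[K]$ with the set of isolated points. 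This is precisely the argument the paper spells out later for the realcompact analogue in the proof of part (iv) of Theorem~\ref{Thm:  Main result}, so you should either import that argument here or cite \cite[Theorem 6.5.3]{DalesDashiellLauStrass2016} directly rather than rely on the idempotent/atom correspondence as stated.
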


Let $X$ be a realcompact space\footnote{We recall that a topological space is \emph{realcompact} if it is homeomorphic to a closed subspace of $\R^{\mathrm{m}}$ for some cardinal $\mathrm{m}$.  Several characterisations of realcompact spaces can be found, for instance, in \cite[Section 3.11]{Engelking1989}.}, and denote by $\contX$ the vector lattice (in fact, $f$-algebra) of real-valued continuous functions on $X$.  Denote by $\orderdual{\contX}$ and $\obidual{\contX}$ the order dual and order bidual or $\contX$, respectively.  The Arens products $\lbox$ and $\diamond$ are defined on $\obidual{\contX}$ in exactly the same way as above.\footnote{This can in fact be done for any lattice ordered algebra, see for instance \cite{HuijsmansdePagter1984II}.}  It is known (see \cite[Remark 3.5 (ii) and Theorem 4.4]{HuijsmansdePagter1984II}) that $(\obidual{\contX},\diamond)$ is a commutative and unital $f$-algebra, the canonical embedding $\sigma_X:\contX\to \obidual{\contX}$ is a lattice and algebra embedding, and $\sigma_X(\onefunction_X)$ is the multiplicative identity in $(\obidual{\contX},\diamond)$

In this paper, we generalize the above theorem to the setting of a realcompact spaces $X$, where the coinciding norm and order bidual of $\contK$ are then replaced with the order bidual of the vector lattice $\contX$ of all continuous real-valued functions on $X$; see Theorem~\ref{Thm:  Main result}. The main point is to show that there exists a realcompact space $\tilde{X}$ (obviously unique up to homeomorphism) so that $(\obidual{\contX},\diamond)$ and $\cont(\tilde{X})$ are lattice and algebra isomorphic.  We solve this problem using the machinery of direct and inverse limits in categories of vector lattices recently developed in \cite{vanAmstelvanderwalt2022}.

The paper is organised as follows.  In the remainder of this section, we give some of the background which is required for the proof our main result. In Section \ref{Subsection:  Realcompact spaces and the order dual of C(X)}, we recall some facts about realcompact spaces and describe the order duals of the associated vector lattices of continuous real-valued functions on them. Pertinent results on lattice homomorphisms, including their relation to ring homomorphisms, are discussed in Section \ref{Subsection:  Lattice homomorphisms}.  Direct and inverse limits are discussed in Section \ref{Subsection:  Direct and inverse limits}.  In particular, we recall the existence results and duality theorems for such limits in suitable categories of vector lattices as developed in \cite{vanAmstelvanderwalt2022}.

Section \ref{Section:  A representation theorem for C(X) order bidual} contains the proof of our main result.  This is done in several steps.  In Section \ref{Subsection:  Direct limits of spaces of measures}, we show how the order dual of $\contX$ for a realcompact space $X$ may be expressed as the direct limit of spaces of measures on compact subsets of $X$; see Corollary~\ref{Cor: Order dual of C(X) is direct limit}.  The relationship between direct limits in the category of topological spaces and inverse limits of the associated spaces of continuous functions is discussed in Section \ref{Subsection:  Inverse limits of C(X)-spaces}. 
In Section \ref{Subsection:  A representation theorem}, the results of Sections \ref{Subsection:  Direct limits of spaces of measures} and \ref{Subsection:  Inverse limits of C(X)-spaces} are combined with the duality result for direct limits recalled in Section \ref{Subsection:  Direct and inverse limits} to yield our main result: a representation theorem for the order bidual of $\cont(X)$ for realcompact $X$; see Theorem~\ref{Thm:  Main result}.

We conclude this introduction with a few notations, conventions, and facts, some of which were already tacitly used above.

Vector lattices are not assumed to be Archimedean. When $\vlat{E}$ is a vector lattice, we denote by $\orderdual{\vlat{E}}$ the order dual of $\vlat{E}$, and by $\ordercontn{\vlat{E}}$ its order continuous dual.  For $u,v\in\vlat{E}$ with $u\leq v$, we let $[u,v]\defeq \{x\in\vlat{E}~:~u\leq x\leq v\}$ denote the corresponding order interval.  An \emph{$f$-algebra} is a vector lattice $E$ which is also an associate algebra so that the product of two positive elements is positive, and for all $u,v\in E$, if $u\wedge v=0$ then $(uw)\wedge v = (wu)\wedge v=0$ for all positive $w\in E$.

All topological spaces in this paper are assumed to be Hausdorff. We recall that a \emph{Tychonoff space} is a completely regular topological space. Every space with the property that every open cover has a countable subcover is realcompact; see \cite[8.2]{GillmanJerison1960}. In particular, every compact space is realcompact. A locally compact space, however, need not be realcompact; see \cite[3.11.2]{Engelking1989}. Every  realcompact space is a Tychonoff space. If $X$ is a topological space, then $\contX$ denotes the space of all continuous, not necessarily bounded, real-valued functions on $X$. A Tychonoff space $X$ is \emph{extremally disconnected} if the closure of every open set in $X$ is open. It is well known that $X$ is extremally disconnected if and only if $\contX$ is Dedekind complete, see for instance \cite[Theorems 43.3 and 43.11]{LuxemburgZaanen1971RSI}. We say that a subspace $Y$ of a topological space $X$ is \emph{$\cont$-embedded in $X$} if every function in $\contY$ can be extended to a function in $\contX$.
If $\theta:X\to Y$ is a continuous map between topological spaces, then we define the lattice homomorphism $T_\theta:\contY\to\contX$ by setting
\[
T_\theta (u)=u\circ \theta,~u\in\contY.
\]
This map is also a unital ring homomorphism.  We note that, in general, we do not require ring homomorphisms to be unital.

\subsection{Realcompact spaces and the order dual of $\boldsymbol{\contX}$}\label{Subsection:  Realcompact spaces and the order dual of C(X)}

Realcompact spaces constitute a convenient class of topological spaces for the study of vector lattices (and rings) of continuous functions.  Indeed, for any topological space $X$ there exists a unique realcompact space, which we denote by $\kappa X$, so that $\contX$ and $\cont(\kappa X)$ are isomorphic as vector lattices and as rings.  This fact plays a important role in the proof of our main result, Theorem \ref{Thm:  Main result}.  As such, we recall it here for the convenience of the reader.

\begin{thm}\label{Thm:  Tychonoff quotient and realcompactification}
Let $X$ be a topological space.  The following statements are true.
\begin{enumerate}
    \item[(i)] There exists a Tychonoff space $Y$ and a continuous surjection $\tau:X\to Y$ so that $\contY\ni u\mapsto u\circ \tau\in \contX$ is a ring and lattice isomorphism.
    \item[(ii)] If $X$ is a Tychonoff space then there exists a realcompact space $\upsilon X$ and an embedding $\upsilon:X\to \upsilon X$ onto a dense, $\cont$-embedded subspace of $\upsilon X$ so that $\cont(\upsilon X)\ni u\mapsto u\circ \upsilon\in \contX$ is a lattice and ring isomorphism. These properties determine $\upsilon X$ up to homeomorphism.
\end{enumerate}
\end{thm}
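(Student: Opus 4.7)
Both statements are classical, and I would largely sketch them along textbook lines rather than reprove them from first principles.

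For (i), I would perform the Tychonoff quotient construction. Define the equivalence relation $\sim$ on $X$ by declaring $x\sim y$ iff $f(x)=f(y)$ for all $f\in\contX$, let $Y\defeq X/{\sim}$, and equip $Y$ with the weakest topology making every function $\tilde f\colon Y\to\R$, defined by $\tilde f([x])\defeq f(x)$, continuous. By construction this topology is initial with respect to a family of real-valued functions, so $Y$ is completely regular; it is Hausdorff since for distinct equivalence classes $[x]\neq[y]$ there exists by definition an $f\in\contX$ separating their representatives. The quotient map $\tau\colon X\to Y$ is continuous because $\tilde f\circ\tau=f$ for every $f\in\contX$, and is surjective. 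The map $u\mapsto u\circ\tau$ is a unital ring and lattice homomorphism because precomposition preserves pointwise operations; it is injective by surjectivity of $\tau$, and surjective because every $f\in\contX$ descends by definition to the continuous $\tilde f\in\contY$.

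For (ii), the target is the Hewitt realcompactification, and I would use the standard construction inside $\beta X$. Using that $X$ is Tychonoff, embed $X\hookrightarrow\beta X$ in its Stone--\v{C}ech compactification. For each $f\in\contX$, view $f$ as a map into the one-point compactification $\R\cup\{\infty\}$ and let $f^\beta\colon\beta X\to\R\cup\{\infty\}$ be the continuous extension. Define
\[
\upsilon X\defeq\bigcap_{f\in\contX}(f^\beta)^{-1}(\R),
\]
with $\upsilon\colon X\to\upsilon X$ the inclusion. Equivalently, $\upsilon X$ can be realised as the closure in $\R^{\contX}$ of the image of the evaluation map $x\mapsto(f(x))_{f\in\contX}$. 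The plan is then to verify each claimed property in turn: density of $\upsilon(X)$ in $\upsilon X$ is inherited from density in $\beta X$; the $\cont$-embedding is immediate, since for $f\in\contX$ the restriction of $f^\beta$ to $\upsilon X$ is a real-valued continuous extension by the very definition of $\upsilon X$; and realcompactness follows because $\upsilon X$ embeds as a closed subspace of $\R^{\contX}$ via the evaluation map (any limit point of the image would itself correspond to a point of $\upsilon X$). Bijectivity of $\cont(\upsilon X)\ni u\mapsto u\circ\upsilon\in\contX$ then comes out formally: surjectivity is the $\cont$-embedding, and injectivity follows from density of $\upsilon(X)$ in the Hausdorff space $\upsilon X$.

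For uniqueness up to homeomorphism, I would invoke the universal property implicit in the three properties. If $Z$ is another realcompact space containing $X$ as a dense, $\cont$-embedded subspace, then realcompactness of both $\upsilon X$ and $Z$ plus the isomorphisms $\contX\cong\cont(\upsilon X)\cong\cont(Z)$ (all implemented by extension from $X$) give mutually inverse continuous maps $\upsilon X\to Z$ and $Z\to\upsilon X$ that restrict to the identity on $X$; density and Hausdorffness force these to be inverse homeomorphisms. The main obstacle I expect is verifying that $\upsilon X$ is realcompact; although not deep, it requires a short argument either via the closed embedding into $\R^{\contX}$ or by characterising $\upsilon X$ inside $\beta X$ as the intersection of a family of cozero-type sets. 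In a paper of this kind I would cite \cite{GillmanJerison1960} or \cite{Engelking1989} for this step rather than reproving it.
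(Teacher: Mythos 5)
Your sketch is correct and follows exactly the classical route: the paper itself gives no proof but simply cites \cite[3.9 and 8.7]{GillmanJerison1960}, and your constructions (the Tychonoff quotient via the initial topology from the induced functions, and the Hewitt realcompactification $\upsilon X$ inside $\beta X$, with realcompactness via the closed embedding into $\R^{\contX}$) are precisely the arguments in those references. Deferring the verification of realcompactness of $\upsilon X$ to \cite{GillmanJerison1960} or \cite{Engelking1989} is consistent with what the authors do.
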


\begin{thm}\label{Thm:  C(X) determines X}
Let $X$ and $Y$ be realcompact spaces.  If $\contX$ and $\contY$ are ring or lattice isomorphic, then $X$ and $Y$ are homeomorphic.
\end{thm}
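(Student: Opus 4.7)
The plan is to reconstruct $X$ from $\contX$ as a space of real-valued homomorphisms. For the ring case, I would let $\mathcal{H}_R(\contX)$ denote the set of nonzero ring homomorphisms $\psi\colon\contX\to\R$, equipped with the topology of pointwise convergence (that is, the subspace topology inherited from $\R^{\contX}$). For the lattice case, I would let $\mathcal{H}_L(\contX)$ denote the set of vector lattice homomorphisms $\psi\colon\contX\to\R$ normalised by $\psi(\onefunction_X)=1$, topologised in the same way. The first step is to show that the evaluation map $\eta_X\colon X\to\mathcal{H}(\contX)$, $x\mapsto\delta_x$, is a homeomorphism in each case. Injectivity is immediate from the fact that $\contX$ separates the points of the Tychonoff space $X$, and $\eta_X$ is a topological embedding because $X$ already carries the initial topology for $\contX$. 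The substantive content is surjectivity: every nonzero real ring (respectively, normalised vector lattice) homomorphism on $\contX$ is of the form $\delta_x$ for some $x\in X$. This is the classical Gelfand--Kolmogoroff--Hewitt theorem in the ring case, and its analogue for maximal $\ell$-ideals in the lattice case, and it is precisely where realcompactness of $X$ enters: it says that every ``real'' maximal ideal (respectively, maximal $\ell$-ideal) of $\contX$ is fixed.

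Granting this identification, given a ring (respectively, vector lattice) isomorphism $\varphi\colon\contX\to\contY$, pre-composition with $\varphi$ sends homomorphisms of the appropriate type to homomorphisms of the same type, giving a bijection
\[
\varphi^{\ast}\colon\mathcal{H}(\contY)\to\mathcal{H}(\contX),\quad \rho\mapsto\rho\circ\varphi,
\]
with inverse induced by $\varphi^{-1}$. Continuity of $\varphi^{\ast}$ is immediate from the definition of the pointwise topology: for each $u\in\contX$, the map $\rho\mapsto(\rho\circ\varphi)(u)$ is continuous on $\mathcal{H}(\contY)$. Hence $\varphi^{\ast}$ is a homeomorphism, and composing with $\eta_X$ and $\eta_Y$ produces the required homeomorphism $X\cong Y$.

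The main obstacle is the surjectivity statement in the first step; without realcompactness one only recovers the Hewitt realcompactification $\upsilon X$, in agreement with Theorem~\ref{Thm:  Tychonoff quotient and realcompactification}(ii). A secondary technicality arises in the lattice case because a vector lattice isomorphism need not preserve $\onefunction_X$. The cleanest way to handle this is to work with the intrinsic space of maximal $\ell$-ideals of $\contX$ rather than with normalised lattice homomorphisms: any vector lattice isomorphism $\varphi$ carries maximal $\ell$-ideals to maximal $\ell$-ideals bijectively, so the homeomorphism of the corresponding spectra follows without any choice of normalisation, and for realcompact $X$ this spectrum is canonically identified with $X$ itself.
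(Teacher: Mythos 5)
Your proposal is, in essence, the proof behind the paper's own treatment: the paper does not prove this theorem but cites \cite[8.3]{GillmanJerison1960}, and the argument there is exactly the Gelfand--Kolmogoroff--Hewitt reconstruction you describe, recovering $X$ as the space of real-valued homomorphisms (equivalently, fixed maximal ideals) of $\contX$ in the pointwise/hull--kernel topology, with realcompactness entering precisely to guarantee that every real homomorphism is a point evaluation. The ring case and the normalised lattice case are correct as you set them up (note that a nonzero ring homomorphism into $\R$ is automatically unital and order preserving, so no separate normalisation is needed there). The one place to be careful is your closing remark for the lattice case: the space of \emph{all} maximal $\ell$-ideals of $\contX$ is not canonically $X$ for noncompact realcompact $X$ (it is a compactification of $X$, in analogy with the maximal ring ideal space being $\beta X$), so you must restrict to the \emph{real} maximal $\ell$-ideals --- those of codimension one, equivalently with quotient isomorphic to $\R$ as a totally ordered vector space --- check that this class is preserved by any vector lattice isomorphism (it is, since the quotients are order isomorphic), and then invoke realcompactness to identify these with the kernels of the point evaluations $\delta_x$. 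With that qualifier inserted, your argument, including your correct observation that a vector lattice isomorphism need not send $\onefunction_X$ to $\onefunction_Y$, goes through and coincides with the classical proof.
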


\begin{cor}\label{Cor:  Realcompactification of arbitrary space}
Let $X$ be a topological space.  There exists a unique realcompact space $\kappa X$ and a continuous map $\kappa:X\to \kappa X$ onto a dense $\cont$-embedded subspace of $\kappa X$ so that $\cont(\kappa X)\ni u\mapsto u\circ\kappa\in \contX$ is a ring and lattice isomorphism.
\end{cor}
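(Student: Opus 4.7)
The plan is to bootstrap from the two parts of Theorem~\ref{Thm:  Tychonoff quotient and realcompactification} and close the loop with the uniqueness statement in Theorem~\ref{Thm:  C(X) determines X}. The existence part should be a two-step reduction: first collapse $X$ to a Tychonoff space, then realcompactify that Tychonoff space.

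For existence, I would begin by applying Theorem~\ref{Thm:  Tychonoff quotient and realcompactification}(i) to the given topological space $X$ to obtain a Tychonoff space $Y$ together with a continuous surjection $\tau\colon X\to Y$ such that $T_\tau\colon \contY\to\contX$ is a ring and lattice isomorphism. Since $Y$ is Tychonoff, I can then apply Theorem~\ref{Thm:  Tychonoff quotient and realcompactification}(ii) to $Y$ to obtain a realcompact space $\upsilon Y$ and an embedding $\upsilon\colon Y\to \upsilon Y$ onto a dense, $\cont$-embedded subspace such that $T_\upsilon\colon \cont(\upsilon Y)\to\contY$ is a ring and lattice isomorphism. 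I would then define
\[
\kappa X \defeq \upsilon Y, \qquad \kappa \defeq \upsilon \circ \tau \colon X \to \kappa X.
\]
Continuity of $\kappa$ is automatic from that of $\tau$ and $\upsilon$, and the image $\kappa[X]=\upsilon[\tau[X]]=\upsilon[Y]$ coincides with the dense, $\cont$-embedded subspace $\upsilon[Y]\subseteq \upsilon Y$. The composite $T_\kappa = T_\tau\circ T_\upsilon\colon \cont(\kappa X)\to \contX$ is a composition of ring and lattice isomorphisms, hence is itself a ring and lattice isomorphism, as required.

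For uniqueness, suppose $Z$ is any realcompact space equipped with a continuous map $\eta\colon X\to Z$ onto a dense $\cont$-embedded subspace such that $T_\eta\colon \cont(Z)\to\contX$ is a ring and lattice isomorphism. Then $\cont(Z)$ and $\cont(\kappa X)$ are both ring and lattice isomorphic to $\contX$, and so are ring and lattice isomorphic to each other. Since $Z$ and $\kappa X$ are both realcompact, Theorem~\ref{Thm:  C(X) determines X} yields a homeomorphism $Z\cong \kappa X$, establishing uniqueness up to homeomorphism.

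I do not anticipate any real obstacle; the corollary is essentially a packaging of the two preceding theorems, with the single minor subtlety being to verify that the image of the composite map $\kappa=\upsilon\circ\tau$ is genuinely dense and $\cont$-embedded in $\kappa X$. Density is clear because $\tau$ is surjective onto $Y$ and $\upsilon[Y]$ is dense in $\upsilon Y$. The $\cont$-embedding property of $\kappa[X]=\upsilon[Y]$ in $\kappa X=\upsilon Y$ is inherited directly from Theorem~\ref{Thm:  Tychonoff quotient and realcompactification}(ii).
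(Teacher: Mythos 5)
Your proposal is correct and matches the paper's intent exactly: the paper itself gives no written proof beyond stating that the corollary ``follows at once'' from Theorems~\ref{Thm:  Tychonoff quotient and realcompactification} and~\ref{Thm:  C(X) determines X}, and your two-step composition $\kappa=\upsilon\circ\tau$ with uniqueness via Theorem~\ref{Thm:  C(X) determines X} is precisely that argument, spelled out.
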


Theorem \ref{Thm:  Tychonoff quotient and realcompactification} is a combination of \cite[3.9 and 8.7]{GillmanJerison1960}, Theorem \ref{Thm:  C(X) determines X} is \cite[8.3]{GillmanJerison1960}, and Corollary \ref{Cor:  Realcompactification of arbitrary space} follows at once from Theorems \ref{Thm:  Tychonoff quotient and realcompactification} and \ref{Thm:  C(X) determines X}. As Corollary \ref{Cor:  Realcompactification of arbitrary space} shows, there is no loss of generality when restricting oneself to realcompact spaces, provided that one is interested solely in the lattice or ring structure of $\contX$ for a topological space $X$.

We now turn to measures on topological spaces, and collect the basic facts which are needed in Section \ref{Section:  A representation theorem for C(X) order bidual}, not the least of which is a description of the order dual of $\contX$ for realcompact $X$ in Theorem~\ref{Thm:  Riesz Representation Theorem for C(X)}.  Since the terminology for measures on topological spaces varies considerably across the literature, we declare our conventions.

Let $X$ be a topological space.  Denote by $\borel{X}$ the Borel $\sigma$-algebra generated by the open sets in $X$.  A \emph{(signed) Borel measure} on $X$ is a real-valued and $\sigma$-additive function on $\borel{X}$.  Following Bogachev \cite[Definition~7.1.1]{Bogachev2007}, we call a Borel measure $\mu$ on $X$ a \emph{Radon measure} if for every $B \in \borel{X}$,
\[
|\mu|(B)=\sup\{|\mu|(C) ~:~ C\subseteq B \text{ is compact}\}.
\]
Equivalently, $\mu$ is Radon if for every $B \in \borel{X}$ and every $\varepsilon>0$ there exists a compact set $C\subseteq B$ so that $|\mu|(B\setminus C)<\varepsilon$. Denote the space of Radon measures on $X$ by $\measures(X)$.  This space is a Dedekind complete vector lattice with respect to the pointwise operations and order; see \cite[Theorem 27.3]{Zaanen1997Introduction} and \cite[Theorem 2.7]{vanAmstelvanderwalt2022}. In fact, for $\mu,\nu\in \measures(X)$, we have
\[
(\mu\vee \nu)(B) = \sup\left\lbrace \mu(A)+\nu(B\setminus A) ~:~ A\subseteq B,~~ A \in \borel{X} \right\rbrace,~~ B \in \borel{X};
\]
and, for any upward directed set $D\subseteq \measures(X)^+$ with supremum $\sup D$ in $\measures(X)$, we have
\[
(\sup D)(B) = \sup\{\mu(B) ~:~ \mu\in D\},~~ B \in \borel{X}.
\]
Recall that the \emph{support} $S_\mu$ of a Borel measure $\mu$ on $X$ is defined as
\[
S_\mu \defeq \{x\in X ~:~ |\mu|(U)>0 \text{ for all } U\ni x \text{ open}\}.
\]
Clearly, $S_\mu$ is a closed subset of $X$.
A non-zero Borel measure $\mu$ may have empty support, and even if $S_\mu\neq \emptyset$, it may have zero measure \cite[Vol.\ II, Example 7.1.3]{Bogachev2007}.  However, if $\mu$ is a nonzero Radon measure then $S_\mu \neq \emptyset$ and $|\mu|(S_\mu)=|\mu|(X)$; in fact, for every $B \in \borel{X}$, $|\mu|(B)=|\mu|(B\cap S_\mu)$, see \cite[Vol.\ II, Proposition 7.2.9]{Bogachev2007}.  We observe that this last identity implies that  $|\mu|(B)=0$ for all $B\in\borel{X}$ disjoint from $S_\mu$.  We list the following useful properties of the support of a measure.  The proofs are straightforward variations of the corresponding results for measures on locally compact spaces, which are discussed in many places, for instance \cite[Chapter 4]{DalesDashiellLauStrass2016}, and are therefore omitted.

\begin{prop}\label{Prop:  Properties of support of a measure}
Let $\mu$ and $\nu$ be Radon measures on a topological space $X$.  The following statements are true.
\begin{enumerate}
    \item[(i)] If $|\mu|\leq |\nu|$ then $S_\mu\subseteq S_\nu$.
    \item[(ii)] $S_{\mu+\nu}\subseteq S_{|\mu|+|\nu|}$
    \item[(iii)] $S_{|\mu|+|\nu|} = S_\mu \cup S_\nu$.
\end{enumerate}
\end{prop}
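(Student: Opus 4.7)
The plan is to work directly from the neighborhood definition of $S_\mu$, since all three claims reduce to elementary manipulations of the pointwise inequalities satisfied by $|\mu|$, $|\nu|$, and $|\mu+\nu|$. No Radon-specific property (such as inner regularity by compact sets) will actually be needed; the only external fact required is the standard vector lattice inequality $|\mu+\nu|\leq |\mu|+|\nu|$ in $\measures(X)$, which is available because $\measures(X)$ is a Dedekind complete vector lattice.

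For (i), I would argue directly: if $|\mu|\leq |\nu|$ in $\measures(X)$, then for every Borel set $B$ we have $|\mu|(B)\leq |\nu|(B)$; in particular $|\mu|(U)\leq |\nu|(U)$ for every open $U\subseteq X$. Hence, if $x\in S_\mu$, then for every open $U\ni x$ one has $|\nu|(U)\geq |\mu|(U)>0$, so $x\in S_\nu$.

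For (ii), I would simply combine (i) with the inequality $|\mu+\nu|\leq |\mu|+|\nu|$, noting that $|\mu|+|\nu|$ is positive and so equals its own modulus; therefore $S_{\mu+\nu}\subseteq S_{|\mu|+|\nu|}$ by (i).

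For (iii), the inclusion $S_\mu\cup S_\nu\subseteq S_{|\mu|+|\nu|}$ again follows from (i) applied to the two inequalities $|\mu|\leq |\mu|+|\nu|$ and $|\nu|\leq |\mu|+|\nu|$. For the reverse inclusion, I would take a contrapositive: if $x\notin S_\mu\cup S_\nu$, choose open neighbourhoods $U\ni x$ and $V\ni x$ with $|\mu|(U)=0$ and $|\nu|(V)=0$; then $W\defeq U\cap V$ is an open neighbourhood of $x$ on which both $|\mu|$ and $|\nu|$ vanish (by monotonicity), hence $(|\mu|+|\nu|)(W)=0$, so $x\notin S_{|\mu|+|\nu|}$. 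Since there is no serious obstacle here — everything is a direct unwinding of the definitions together with the standard modulus inequality — I would expect the only real care to be in the bookkeeping for (iii), making sure to intersect the two witnessing open sets before invoking monotonicity of $|\mu|$ and $|\nu|$.
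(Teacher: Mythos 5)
Your proof is correct; the paper itself omits the proof of this proposition, remarking only that it is a straightforward variation of the standard argument for measures on locally compact spaces, and your direct unwinding of the neighbourhood definition of $S_\mu$ together with the lattice inequality $|\mu+\nu|\leq|\mu|+|\nu|$ (and the intersection of the two witnessing neighbourhoods in (iii)) is exactly that standard argument. As you note, no Radon-specific regularity is needed, only that the lattice operations on $\measures(X)$ are computed setwise so that $|\mu|\leq|\nu|$ means $|\mu|(B)\leq|\nu|(B)$ for every Borel $B$.
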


A Radon measure $\mu$ is called \emph{compactly supported} if $S_\mu$ is compact.  We denote the space of all compactly supported Radon measures on a topological space $X$ as $\measuresc(X)$; this space is an ideal in $\measures(X)$ (see \cite[Theorem 2.7]{vanAmstelvanderwalt2022}), and therefore a Dedekind complete vector lattice in its own right.

The promised description of the order dual of $\contX$ can now be stated.  We refer the reader to \cite[Theorem 2.8]{vanAmstelvanderwalt2022} and the references cited there for more details.

\begin{thm}\label{Thm:  Riesz Representation Theorem for C(X)}
Let $X$ be a realcompact space. For $\mu\in\measuresc(X)$, define $\varphi_\mu\in\orderdual{\contX}$ by setting
\[
\varphi_\mu(u) \defeq \int_X u \di\mu,~~ u\in\contX.
\]
Then the map $\mu\mapsto\varphi_\mu$ is a lattice isomorphism between $\measuresc(X)$ and $\orderdual{\contX}$.
\end{thm}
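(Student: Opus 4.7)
The plan is to show that $\Phi \colon \mu \mapsto \varphi_\mu$ is a well-defined, linear, bipositive bijection, which then automatically makes it a lattice isomorphism. Fix $\mu \in \measuresc(X)$. Since $S_\mu$ is compact and $|\mu|$ is concentrated on $S_\mu$, every $u \in \contX$ is bounded on $S_\mu$ and hence $|\mu|$-integrable, so $\varphi_\mu$ is well-defined. Moreover, whenever $|u| \le v$ with $v \in \contX^+$, we have $|\varphi_\mu(u)| \le \int v \di|\mu| < \infty$, so $\varphi_\mu \in \orderdual{\contX}$. Linearity of $\Phi$ and the implication $\mu \ge 0 \Rightarrow \varphi_\mu \ge 0$ are immediate from the corresponding properties of the integral.

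For injectivity and reverse positivity I use that every compact subset $K$ of the Tychonoff (hence realcompact) space $X$ is $\cont$-embedded; this follows from Tietze's theorem applied in $\beta X$, into which $X$ embeds densely. If $\varphi_\mu = 0$, take $K = S_\mu$ and extend each $u \in \contK$ to some $\tilde u \in \contX$; then $\int u \di\mu = \varphi_\mu(\tilde u) = 0$, and uniqueness in the classical Riesz representation theorem on $K$ forces $\mu|_K = 0$, hence $\mu = 0$. Similarly, if $\varphi_\mu \ge 0$ but $\mu^- \neq 0$ in the Jordan decomposition, inner regularity produces a compact set inside a $\mu^-$-positive Hahn set on which $\mu^- > 0$ and $\mu^+ = 0$, and a Urysohn function built in $\beta X$ and restricted to $X$ provides an element of $\contX^+$ on which $\varphi_\mu$ is strictly negative, a contradiction.

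The main obstacle is surjectivity. Given $\varphi \in \orderdual{\contX}$, the Riesz--Kantorovich formulas make $\orderdual{\contX}$ a Dedekind complete vector lattice, so we may write $\varphi = \varphi^+ - \varphi^-$ and reduce to $\varphi \ge 0$. The heart of the argument is a compact support lemma: there exists a compact $K \subset X$ such that $\varphi(u) = 0$ for every $u \in \contX$ with $u|_K = 0$. This is the step where realcompactness of $X$ is genuinely needed (as opposed to mere Tychonoffness), since it is precisely realcompactness that rules out ``support at infinity'' along points of $\upsilon X \setminus X$. My plan to prove the lemma is by contradiction: if no such compact $K$ existed, one would inductively build positive $u_n \in \contX$ with pairwise disjoint supports, each vanishing on a progressively larger compact set and each satisfying $\varphi(u_n) \ge 1$, in such a way that the sum $v \defeq \sum_n u_n/2^n$ converges locally to an element of $\contX$; then the partial sums lie in $[0,v]$, yet $\varphi$ is unbounded on them, contradicting order boundedness. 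Arranging the $u_n$ so that $v$ is genuinely continuous is the delicate step, and realcompactness---through the embedding $X \hookrightarrow \R^{\mathrm{m}}$---supplies the abundance of continuous functions needed to separate an exhausting family of compact pieces.

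Once the lemma is in hand, $\varphi$ descends to a positive linear functional $\psi$ on $\contK$ by setting $\psi(u) \defeq \varphi(\tilde u)$ for any continuous extension $\tilde u \in \contX$; well-definedness is exactly the content of the lemma. The classical Riesz representation theorem on the compact Hausdorff space $K$ then produces a Radon measure $\nu \in \measures(K)$ with $\psi(u) = \int_K u \di\nu$, and extending by $\mu(B) \defeq \nu(B \cap K)$ for $B \in \borel{X}$ yields an element of $\measuresc(X)$ with $\varphi_\mu = \varphi$, completing surjectivity.
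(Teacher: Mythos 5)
The paper does not actually prove this statement; it quotes it from \cite[Theorem 2.8]{vanAmstelvanderwalt2022} and the references given there, so your proposal can only be judged on its own merits. The routine parts are fine: well-definedness and order boundedness of $\varphi_\mu$, injectivity via the $\cont$-embeddedness of compact subsets of a Tychonoff space together with uniqueness in the classical Riesz theorem on $S_\mu$, and the reduction of surjectivity (via Riesz--Kantorovich) to positive $\varphi$ plus a compact-support lemma followed by descent to $\cont(K)$. (The separate bipositivity argument is in fact redundant once surjectivity is proved in the form ``every positive $\varphi$ is $\varphi_\mu$ for some $\mu\geq 0$'', since injectivity then forces $\mu\geq 0$ whenever $\varphi_\mu\geq 0$.)

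The genuine gap is the compact-support lemma, which is the entire content of the theorem and the only place realcompactness enters; your sketch of it does not go through as written. First, a quantitative slip: with $\varphi(u_n)\geq 1$ and $v=\sum_n u_n/2^n$, the partial sums $s_N$ satisfy $\varphi(s_N)=\sum_{n\leq N}\varphi(u_n)/2^n$, which need not tend to infinity, so no contradiction with $\varphi[[0,v]]$ being bounded is obtained; you would need $\varphi(u_n)\gtrsim 4^n$ (achievable by rescaling, but then the $u_n$ are large and the convergence problem below gets worse). Second, and more seriously, the construction of the $u_n$ is not given and is genuinely problematic: the support of a $u_n\in\contX^+$ (the closure of its cozero set) need not be compact, so there is no way to enclose $\supp u_1,\dots,\supp u_n$ in the next compact set $K_{n+1}$ and thereby force disjointness; and even with pairwise disjoint supports, continuity of $\sum_n u_n/2^n$ requires local finiteness of the supports, which is where one would need local compactness or a compact exhaustion --- neither of which a general realcompact space possesses (your appeal to ``an exhausting family of compact pieces'' has no basis: realcompact spaces need not be hemicompact or $\sigma$-compact). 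The standard route to this lemma (Hewitt) is quite different: one restricts $\varphi$ to $\contb(X)\cong\cont(\beta X)$, represents the restriction by a Radon measure $\nu$ on $\beta X$, and uses $X=\upsilon X$ to show $S_\nu\subseteq X$ --- for $p\in\beta X\mysetminus\upsilon X$ there is $f\in\contX$, $f\geq 1$, whose extension to $\beta X$ is infinite at $p$, and the finiteness of $\varphi(f)$ together with monotone convergence forces $\nu$ to vanish near $p$. Until the lemma is proved by some such argument, the surjectivity half of your proof is incomplete.
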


\subsection{Lattice homomorphisms}\label{Subsection:  Lattice homomorphisms}

Let $\vlat{E}$ and $\vlat{F}$ be vector lattices and $T:\vlat{E}\to\vlat{F}$ a linear map.  $T$ is a \emph{lattice homomorphism} if $T(u\vee v)=T(u)\vee T(v)$ for all $u,v\in \vlat{E}$; $T$ is \emph{normal} if $\sup T[D]=T(\sup D)$ for every $D\subseteq \vlat{E}$ such that $\sup D$ exists in $\vlat{E}$; and $T$ is \emph{interval preserving} if it is positive and $T[[0,u]]=[0,T(u)]$ for every $0\leq u\in \vlat{E}$.  An interval preserving linear map need not be lattice homomorphism, nor is every lattice homomorphism interval preserving; see \cite[p.\ 95]{AliprantisBurkinshaw2006}, for example. However, it is shown in \cite{DingdeJeu2022} that an interval preserving map $T$ is a lattice homomorphism onto an ideal in $\vlat{F}$ whenever its kernel is an ideal in $\vlat{E}$; for injective interval preserving maps this was already observed in \cite[Proposition 2.1]{vanAmstelvanderwalt2022}.
  The following result is a combination of \cite[Theorems 1.73, 2.16, 2.19, and 2.20]{AliprantisBurkinshaw2006}.  It is basic to understanding the duality results for direct limits discussed in Section \ref{Subsection:  Direct and inverse limits}.

\begin{thm}\label{Thm:  Adjoints of interval preserving vs lattice homomorphisms}
Let $\vlat{E}$ and $\vlat{F}$ be Archimedean vector lattices and $T:\vlat{E}\to\vlat{F}$ a positive linear map. Denote by $T^\sim :\vlat{F}^\sim \to \vlat{E}^\sim$ its order adjoint, $\varphi\mapsto \varphi\circ T$.  The following statements are true. \begin{itemize}
    \item[(i)] $T^\sim$ is positive and order continuous.
    \item[(ii)] If $T$ is order continuous then $T^\sim[\ordercontn{\vlat{F}}]\subseteq \ordercontn{\vlat{E}}$.
    \item[(iii)] If $T$ is interval preserving then $T^\sim$ is a lattice homomorphism.
    \item[(iv)] If $T$ is a lattice homomorphism then $T^\sim$ is interval preserving.  The converse is true if $\preann{\orderdual{\vlat{F}}}=\{0\}$.
\end{itemize}
\end{thm}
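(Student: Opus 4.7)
The plan is to handle (i)--(iv) in order, treating (i)--(iii) as essentially direct applications of the Riesz--Kantorovich formula and the characterisation of the order on the order duals by pointwise comparison on positive elements, with (iv) requiring a more delicate Hahn--Banach--Kantorovich extension argument.

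For (i), positivity of $T^\sim$ is immediate from positivity of $T$; for order continuity, note that $\varphi_\alpha\downarrow 0$ in $\orderdual{\vlat{F}}$ means $\varphi_\alpha(w)\downarrow 0$ for every $w\in\vlat{F}_+$, so taking $w=T(u)$ with $u\in\vlat{E}_+$ gives $(T^\sim\varphi_\alpha)(u)\downarrow 0$. For (ii), I would first reduce to the case of positive $\varphi$ using that $\ordercontn{\vlat{F}}$ is a band in $\orderdual{\vlat{F}}$, and then chain: $u_\alpha\downarrow 0$ in $\vlat{E}$ forces $T(u_\alpha)\downarrow 0$ in $\vlat{F}$ by order continuity of $T$, and $\varphi(T(u_\alpha))\downarrow 0$ by order continuity of $\varphi$. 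For (iii), I would prove the stronger identity $(T^\sim\varphi)^+=T^\sim(\varphi^+)$: the Riesz--Kantorovich formula gives
\[
(T^\sim\varphi)^+(u)=\sup\{\varphi(Tv):v\in[0,u]\}=\sup\{\varphi(w):w\in T([0,u])\}
\]
for $u\in\vlat{E}_+$, and the interval-preserving hypothesis $T([0,u])=[0,T(u)]$ converts the last expression into $\varphi^+(T(u))=T^\sim(\varphi^+)(u)$.

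The forward direction of (iv) is the crux. The inclusion $T^\sim[[0,\psi]]\subseteq[0,T^\sim\psi]$ is trivial from positivity of $T^\sim$. For the reverse, given $\eta\in[0,T^\sim\psi]$, the chain $0\leq\eta\leq T^\sim\psi$ forces $\eta$ to vanish on $\ker T$, so $\eta$ descends to a positive functional $\tilde\eta$ on the sublattice $T[\vlat{E}]\subseteq\vlat{F}$ with $\tilde\eta\leq\psi|_{T[\vlat{E}]}$. A Hahn--Banach--Kantorovich extension theorem then produces $\varphi\in[0,\psi]$ extending $\tilde\eta$, and $T^\sim\varphi=\eta$ follows by construction. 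For the converse direction of (iv): applying (iii) to $T^\sim$ yields that $T^{\sim\sim}:\obidual{\vlat{E}}\to\obidual{\vlat{F}}$ is a lattice homomorphism; the hypothesis $\preann{\orderdual{\vlat{F}}}=\{0\}$ ensures that the canonical map $\iota_{\vlat{F}}:\vlat{F}\to\obidual{\vlat{F}}$ is a lattice embedding, and the intertwining identity $\iota_{\vlat{F}}\circ T=T^{\sim\sim}\circ\iota_{\vlat{E}}$ then transfers the lattice-homomorphism property from $T^{\sim\sim}$ back to $T$.

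The main obstacle is the Hahn--Banach--Kantorovich extension step in the forward direction of (iv): the extension of $\tilde\eta$ from $T[\vlat{E}]$ to $\vlat{F}$ must actually lie within the order interval $[0,\psi]$ in $\orderdual{\vlat{F}}$, which requires invoking a version of the extension theorem with a simultaneous upper-bound constraint (e.g., via the sublinear functional $x\mapsto\psi(x^+)$). The remaining items are essentially mechanical applications of the Riesz--Kantorovich formula together with positivity and the appropriate order continuity of the players involved.
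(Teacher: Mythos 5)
Your proposal is correct and follows the same route as the source the paper relies on: the paper gives no proof of this theorem but cites \cite[Theorems 1.73, 2.16, 2.19, and 2.20]{AliprantisBurkinshaw2006}, and your arguments (Riesz--Kantorovich for (i)--(iii), the Hahn--Banach extension dominated by the sublinear functional $w\mapsto\psi(w^+)$ for the forward part of (iv), and the second-adjoint/bidual-embedding argument for the converse) are precisely the standard proofs given there. The step you flag as the main obstacle does go through with the ordinary scalar Hahn--Banach theorem: domination of $\tilde\eta$ by $w\mapsto\psi(w^+)$ on the sublattice $T[\vlat{E}]$ follows from $0\leq\tilde\eta\leq\psi|_{T[\vlat{E}]}$, and any extension dominated by that sublinear functional automatically lies in $[0,\psi]$.
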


Let $X$ and $Y$ be topological spaces.  We call a lattice homomorphism $T:\contX\to \contY$ \emph{unital} if $T(\onefunction_X)=\onefunction_Y$.  For example, if  $\theta:X\to Y$ is a continuous map, then
\[
T_\theta:\contY\ni u\mapsto u\circ \theta \in \contX
\]
is a unital lattice homomorphism.  The connection between ring homomorphisms and lattice homomorphisms between spaces of continuous functions, as well as the structure of such maps, is summarized in the following.

\begin{thm}\label{Thm:  Unital lattice homomorphisms}
Let $X$ and $Y$ be Tychonoff spaces.  The following statements are true.
\begin{enumerate}
    \item[(i)] A map $T:\contY\to \contX$ is a unital lattice homomorphism if and only if it is a unital ring homomorphism.
    \item[(ii)] If $Y$ is realcompact and $T:\contY\to \contX$ is a unital lattice homomorphism, then there exists a unique continuous function $\theta:X\to Y$ so that $T(u)=u\circ \theta$ for all $u\in \contY$.
    \item[(iii)] If $\theta:X\to Y$ is a continuous map then $T_\theta :\contY\to \contX$ is surjective if and only if $\theta$ is a homeomorphism onto a $\cont$-embedded subspace of $Y$.
\end{enumerate}
\end{thm}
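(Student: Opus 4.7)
The plan is to handle the three parts in sequence, observing that part (i) is an internal structural statement about $\contY$ and $\contX$ as $f$-algebras, while parts (ii) and (iii) translate algebraic data on $\contY$ into geometric data about the map $\theta$ and the space $Y$.

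For the direction in (i) from unital ring homomorphism to unital lattice homomorphism, the key observation is that in $\contY$ an element is non-negative if and only if it is a square, because every non-negative continuous function admits a continuous square root. A unital ring homomorphism $T$ therefore preserves positivity, and combining $T(v)^2=T(v^2)$ with the uniqueness of non-negative square roots in $\contX$ yields $T(\sqrt u)=\sqrt{T(u)}$ for $u\geq 0$. Applying this to $|u|=\sqrt{u^2}$ gives $T(|u|)=|T(u)|$, and hence $T$ is a lattice homomorphism through $u\vee v=\tfrac{1}{2}(u+v+|u-v|)$. The reverse implication is the step I expect to be the main obstacle: it rests on the classical fact that the multiplication of a unital Archimedean $f$-algebra is uniquely determined by its lattice structure and its unit, so that every unital lattice homomorphism between such algebras is automatically multiplicative. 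My plan would be to cite this from the standard literature, or alternatively to deduce it by a Stone--Weierstrass style argument on the bounded sublattice generated by a single element.

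For part (ii), for each $x\in X$ set $\varphi_x(u)\defeq T(u)(x)$. Each $\varphi_x\colon\contY\to\R$ is a unital lattice homomorphism, hence by (i) a unital ring homomorphism. Realcompactness of $Y$ then identifies $\varphi_x$ with point evaluation at a unique $\theta(x)\in Y$, so $T(u)=u\circ\theta$ for every $u\in\contY$. Continuity of $\theta$ follows because each composition $u\circ\theta=T(u)$ lies in $\contX$ and the topology of $Y$ is initial for $\contY$; uniqueness of $\theta$ is immediate from point separation of $\contY$ on the Tychonoff space $Y$.

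For part (iii), the implication from homeomorphism onto a $\cont$-embedded subspace is straightforward: given $v\in\contX$, the composite $v\circ\theta^{-1}\in\cont(\theta(X))$ extends to some $u\in\contY$ with $T_\theta(u)=v$. For the converse, assume $T_\theta$ is surjective. Then $T_\theta[\contY]=\contX$ separates the points of the Tychonoff space $X$, which forces $\theta$ to be injective. To see $\theta$ is a homeomorphism onto $\theta(X)$, for any open $U\subseteq X$ and $x\in U$ I would choose $v\in\contX$ with $v(x)=1$ and $v$ vanishing off $U$, write $v=u\circ\theta$, and note that the set on which $u$ is nonzero supplies an open neighborhood of $\theta(x)$ in $\theta(X)$ contained in $\theta(U)$, so $\theta$ is open onto its image. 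Finally, for $\cont$-embeddedness, any $w\in\cont(\theta(X))$ yields $w\circ\theta\in\contX$, which by surjectivity equals $u\circ\theta$ for some $u\in\contY$, and bijectivity of $\theta\colon X\to\theta(X)$ then forces $u|_{\theta(X)}=w$.
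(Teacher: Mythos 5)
Your proposal is correct and, in substance, reconstructs the standard arguments behind the references that the paper cites in place of a proof: part (i) is \cite[1.6]{GillmanJerison1960} together with \cite[Corollary 5.5]{HuijsmansdePagter1984}, part (ii) is \cite[10.6]{GillmanJerison1960}, and part (iii) is \cite[10.3]{GillmanJerison1960}; in particular, the one non-elementary ingredient you propose to quote---that a unital lattice homomorphism between unital Archimedean $f$-algebras is automatically multiplicative---is exactly the ingredient the paper itself defers to the literature. One detail worth making explicit in the forward direction of (i): a unital ring homomorphism is a priori only additive and $\mathbb{Q}$-homogeneous, so before invoking $u\vee v=\tfrac{1}{2}(u+v+|u-v|)$ you should note that the positivity you establish via squares, combined with $\mathbb{Q}$-homogeneity, forces $\R$-homogeneity (squeeze $\lambda u$ between $qu$ and $q'u$ for rationals $q<\lambda<q'$ and $u\geq 0$), so that $T$ is genuinely linear and hence a lattice homomorphism in the paper's sense.
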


The statement in (i) is a combination of \cite[1.6]{GillmanJerison1960} and \cite[Corollary 5.5]{HuijsmansdePagter1984}, (ii) is \cite[10.6]{GillmanJerison1960}, and (iii) is \cite[10.3]{GillmanJerison1960}.

\subsection{Direct and inverse limits}\label{Subsection:  Direct and inverse limits}

Let $\cat$ be a category.  Consider an ordered pair $\cal{D}=\left( (O_\alpha)_{\alpha \in I},(e_{\alpha,\beta})_{\alpha \preceq \beta}\right)$ where $I$ is a directed set; $O_\alpha$ is a $\cat$-object for each  $\alpha \in I$; for all $\alpha \preceq \beta$ in $I$, $e_{\alpha,\beta}$ is a $\cat$-morphism from $O_\alpha$ to $O_\beta$; and $e_{\alpha,\alpha}$ is the identity morphism of $O_\alpha$ for all $\alpha\in I$. We call $\cal{D}$ a \emph{direct system} in $\cat$ if the diagram
\[
\begin{tikzcd}[cramped]
O_\alpha \arrow[rd, "e_{\alpha, \beta}"'] \arrow[rr, "e_{\alpha, \gamma}"] & & O_\gamma\\
& O_\beta\arrow[ru, "e_{\beta, \gamma}"']
\end{tikzcd}
\]
commutes in $\cat$ for all $\alpha \preceq \beta\preceq\gamma$ in $I$.  Let $O$ be a $\cat$-object and, for every $\alpha\in I$, let $e_\alpha: O_\alpha \to O$ be a $\cat$-morphism.  The ordered pair $\cal{S} = (O, (e_\alpha)_{\alpha\in I})$ is a  \emph{compatible system of $\cal{D}$} in $\cat$ if the diagram
\[
\begin{tikzcd}[cramped]
O_\alpha \arrow[rd, "e_{\alpha, \beta}"'] \arrow[rr, "e_{\alpha}"] & & O\\
& O_\beta\arrow[ru, "e_{\beta}"']
\end{tikzcd}
\]
commutes in ${\bf C}$ for all $\alpha\preceq\beta$ in $I$.  A \emph{direct limit} of $\cal{D}$ in $\cat$ is a compatible system $\cal{S}=(O, (e_\alpha)_{\alpha\in I})$ of $\cal{D}$ in $\cat$ with the property that, for every compatible system $\cal{S}'=(O', (e_\alpha')_{\alpha\in I})$ of $\cal{D}$ in $\cat$, there exists a unique $\cat$-morphism $r: O\to O'$ so that the diagram
\[
\begin{tikzcd}[cramped]
O \arrow[rr, "r"] & & O'\\
& O_\alpha \arrow[lu, "e_{\alpha}"] \arrow[ru, "e_{\alpha}'"']
\end{tikzcd}
\]
commutes in $\cat$ for every $\alpha\in I$. Direct limits need not exist, but if they do, then they are isomorphic in a strong sense; see, for example, \cite[p.\ 54]{BucurDeleanu1968}. In view of this, if $\cal{D}$ is a direct system in $\cat$ that has a direct limit in $\cat$, then we will simply speak of \emph{the} direct limit of $\cal{D}$, and let $\ind{\cal{D}}$ denote any of its direct limits. If necessary, the context will make clear which one is meant. On occasions, we will just refer to the object in a direct limit as the direct limit.

Now consider an ordered pair $\cal{I} = \left((O_\alpha)_{\alpha \in I},(p_{\beta , \alpha})_{\beta \succeq \alpha}\right)$ where $I$ is a directed set; $O_\alpha$ is a $\cat$-object for each $\alpha\in I$; for all $\beta \succeq \alpha$ in $I$, $p_{\beta , \alpha}$ is a $\cat$-morphism from $O_\beta$ to $O_\alpha$; and $p_{\alpha,\alpha}$ is the identity morphism of $O_\alpha$ for all $\alpha\in I$.  We call $\cal{I}$ an \emph{inverse system} in $\cat$ if the diagram
\[
\begin{tikzcd}[cramped]
O_\gamma \arrow[rd, "p_{\gamma, \beta}"'] \arrow[rr, "p_{\gamma, \alpha}"] & & O_\alpha\\
& O_\beta\arrow[ru, "p_{\beta, \alpha}"']
\end{tikzcd}
\]
commutes in $\cat$ for all $\alpha \succeq\beta\succeq\gamma$ in $I$.  If $O$ is a $\cat$-object and $p_\alpha:O\to O_\alpha$ is a $\cat$-morphism for every $\alpha \in I$, then the ordered pair $\cal{S} =  \left(O,(p_\alpha)_{\alpha\in I}\right)$ is a \emph{compatible system of $\cal{I}$} in $\cat$ if the diagram
\[
\begin{tikzcd}[cramped]
O \arrow[rd, "p_\beta"'] \arrow[rr, "p_{\alpha}"] & & O_\alpha\\
& O_\beta\arrow[ru, "p_{\beta, \alpha}"']
\end{tikzcd}
\]
commutes in $\cat$ for all $\alpha \preceq \beta$ in $I$.  A compatible system $\cal{S}=\left(O,(p_\alpha)_{\alpha\in I}\right)$ of $\cal{I}$ in $\cat$ is an \emph{inverse limit of $\cal{I}$} in $\cat$ if, for every compatible system $\cal{S}'=(O',(p_\alpha')_{\alpha \in I})$ of $\cal{I}$ in $\cat$, there exists a unique $\cat$-morphism $s:O'\to O$ so that the diagram
\[
\begin{tikzcd}[cramped]
O' \arrow[rd, "p_{\alpha}'"']\arrow[rr, "s"] & & O\arrow[ld, "p_{\alpha}"]\\
& O_\alpha
\end{tikzcd}
\]
commutes in $\cat$ for every $\alpha \in I$.  As with direct limits, inverse limits need not exist, but they are isomorphic in a strong sense if they do. Also similarly, if $\cal {I}$ has an inverse limit in $\cat$, then we will simply speak of \emph{the} inverse limit, and let $\proj{\cal{I}}$ denote any of its inverse limits, or the object in any of these.

We will be concerned with limits of direct and inverse systems in the following categories.

\begin{table}[H]

\begin{tabular}{ |l|l|l| }
\hline
\textsc{Category} \quad & \textsc{Objects}\quad & \textsc{Morphisms} \quad \\
\hline
$\vlc$ & Vector lattices & Lattice homomorphisms\\
\hline
$\nvlc$ & Vector lattices & Normal lattice homomorphisms\\
\hline
$\vlic$ & Vector lattices & Interval preserving lattice homomorphisms\\
\hline
$\topc$ & Topological spaces & Continuous functions \\
\hline
\end{tabular}
\smallskip
\caption{}
\end{table}

We quote the following existence result for direct and inverse limits in the categories listed in Table 1.  For the categories of vector lattices these can be found in \cite{Filter1988,vanAmstelvanderwalt2022}; inverse limits in $\topc$ are discussed in \cite[Appendix 2]{Dugundji1966}.

\begin{thm}\label{Thm:  Existence of limits}
The following statements are true.
\begin{enumerate}
    \item[(i)] If $\cat$ is any of the categories $\vlc$, $\vlic$ or $\topc$, and $\cal{D}$ is a direct system in $\cat$, then $\ind{\cal{D}}$ exists in $\cat$.
    \item[(ii)] If $\cal{I}=\left((\vlat{E}_\alpha)_{\alpha \in I},(p_{\beta , \alpha})_{\beta \succeq \alpha}\right)$ is an inverse system in $\vlc$, then $\proj{\cal{I}}$ exists in $\vlc$. Take such an inverse limit $\left(\vlat{E},(p_\alpha)_{\alpha\in I}\right)$. Then the lattice homomorphisms $p_\alpha:\vlat{E}\to \vlat{E}_\alpha$ separate the points of $O$.
    \item[(iii)] Let $\cal{I}=\left((\vlat{E}_\alpha)_{\alpha \in I},(p_{\beta , \alpha})_{\beta \succeq \alpha}\right)$ be an inverse system in $\nvlc$, and $\cal{S}=\left(\vlat{E},(p_\alpha)_{\alpha \in I}\right)$ an inverse limit in $\vlc$.  If $\vlat{E}_\alpha$ is Dedekind complete for all $\alpha \in I$, then $\cal{S}$ is an inverse limit of $\cal{I}$ in $\nvlc$.
\end{enumerate}
\end{thm}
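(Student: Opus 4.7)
My plan is to construct explicit models of each limit and verify the universal property in each case by a direct diagram chase.

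For (i), the constructions are as follows. In $\topc$ I would take $O := (\bigsqcup_\alpha O_\alpha)/\!\sim$, where $\sim$ is the equivalence relation generated by identifying $x\in O_\alpha$ with $e_{\alpha,\beta}(x)\in O_\beta$ whenever $\alpha\preceq\beta$, equipped with the quotient topology; the maps $e_\alpha$ are the composites $O_\alpha\hookrightarrow\bigsqcup_\alpha O_\alpha\twoheadrightarrow O$, and the universal property is standard because the quotient topology is initial for maps to a space rendering the equivalence relation trivial. For $\vlc$ (and analogously $\vlic$), I would take the quotient of the algebraic direct sum $\bigoplus_\alpha\vlat E_\alpha$ by the subspace generated by all elements $i_\alpha(x)-i_\beta(e_{\alpha,\beta}(x))$. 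By directedness of $I$, any finitely many cosets can be represented by elements of a single $\vlat E_\gamma$, so one defines the lattice operations in $\vlat E_\gamma$ and checks that the outcome is independent of the choice of $\gamma$; this consistency is exactly what the requirement that each $e_{\alpha,\beta}$ be a lattice (respectively, interval preserving) homomorphism provides.

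For (ii), I would use the standard sub-product construction
\[
\vlat E := \Bigl\{(x_\alpha)_\alpha\in\textstyle\prod_\alpha\vlat E_\alpha\,:\,p_{\beta,\alpha}(x_\beta)=x_\alpha\text{ for all }\alpha\preceq\beta\Bigr\},
\]
with componentwise vector and lattice operations and order, together with the coordinate projections $p_\alpha$. Since each $p_{\beta,\alpha}$ is a lattice homomorphism, $\vlat E$ is a sub-vector lattice of the product, and the $p_\alpha$ are lattice homomorphisms. For a compatible system $(O',(p_\alpha'))$ in $\vlc$, the map $s(x):=(p_\alpha'(x))_\alpha$ lies in $\vlat E$ by compatibility, and is the unique lattice homomorphism with $p_\alpha\circ s=p_\alpha'$. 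The $p_\alpha$ separate points by construction.

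For (iii), I have to verify two things about the $\vlc$-inverse limit $\cal S$ from (ii): that each $p_\alpha$ is itself normal, and that for any compatible system $(O',(p_\alpha'))$ in $\nvlc$ the mediating $s$ of (ii) is normal. For the first, let $A\subseteq\vlat E$ have $\sup A=y$ in $\vlat E$. Dedekind completeness of each $\vlat E_\gamma$ furnishes $z_\gamma:=\sup p_\gamma[A]$, and normality of $p_{\beta,\alpha}$ gives $p_{\beta,\alpha}(z_\beta)=\sup p_{\beta,\alpha}[p_\beta[A]]=\sup p_\alpha[A]=z_\alpha$, so that $z:=(z_\gamma)_\gamma$ belongs to $\vlat E$. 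Since $z\geq a$ coordinatewise for every $a\in A$, we get $y\leq z$, hence $p_\alpha(y)\leq z_\alpha$; the reverse inequality holds because $p_\alpha(y)$ is an upper bound of $p_\alpha[A]$. Therefore $p_\alpha(y)=\sup p_\alpha[A]$. For the second claim, if $D\subseteq O'$ has $\sup D$, then for every $\alpha$
\[
p_\alpha(s(\sup D))=p_\alpha'(\sup D)=\sup p_\alpha'[D]=\sup p_\alpha[s[D]]
\]
by normality of $p_\alpha'$, so any $y\in\vlat E$ with $y\geq s[D]$ satisfies $p_\alpha(y)\geq p_\alpha(s(\sup D))$ for every $\alpha$, whence $y\geq s(\sup D)$ in $\vlat E$ and $s(\sup D)=\sup s[D]$.

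The most delicate point is the construction in $\vlc$ and $\vlic$: endowing the quotient of $\bigoplus_\alpha\vlat E_\alpha$ with a well-defined lattice (respectively, interval preserving) structure requires the consistency check under change of joint representative, which is precisely where one uses the hypothesis on the $e_{\alpha,\beta}$. The remaining verifications are essentially diagram chases and short order-theoretic computations; Dedekind completeness enters only in (iii), through the intermediate suprema $z_\gamma$ used to produce the upper bound $z\in\vlat E$.
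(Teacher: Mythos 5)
You should be aware that the paper does not prove this theorem at all: it is explicitly \emph{quoted}, with the vector-lattice cases attributed to Filter and to van Amstel--van der Walt and the $\topc$ case to Dugundji's Appendix~2. So there is no in-paper argument to diverge from, and the explicit models you build (the quotient of the coproduct, respectively of the algebraic direct sum, for direct limits; the thread subspace of the product for inverse limits) are exactly the standard constructions used in those references. Your treatment of (ii) is complete, and your treatment of (iii) is correct and well organised: you rightly split the task into normality of the projections $p_\alpha$ and normality of the mediating morphism, and you locate precisely where Dedekind completeness enters, namely in producing the thread $z=(z_\gamma)_\gamma$ of intermediate suprema, with normality of the bonding maps guaranteeing that $z$ is indeed a thread.

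The one genuine gap is the $\vlic$ case of (i). You write that the consistency check under change of joint representative is ``exactly what the requirement that each $e_{\alpha,\beta}$ be a lattice (respectively, interval preserving) homomorphism provides'', but that check needs only that the $e_{\alpha,\beta}$ are lattice homomorphisms, and it delivers only the direct limit in $\vlc$. To conclude that the same object is $\ind{\cal{D}}$ in $\vlic$ you must additionally verify (a) that each canonical map $e_\alpha:\vlat{E}_\alpha\to\vlat{E}$ is interval preserving, and (b) that the mediating morphism to a compatible system whose maps are interval preserving is itself interval preserving; neither is addressed. Point (a) is where interval preservation of the bonding maps is actually used, and it is not a mere consistency check: given $0\leq y\leq e_\alpha(x)$ in the limit with $0\leq x\in\vlat{E}_\alpha$, one writes $y=e_\beta(y_\beta)$, passes to an index $\gamma\succeq\alpha,\beta$ large enough that $0\leq e_{\beta,\gamma}(y_\beta)\leq e_{\alpha,\gamma}(x)$ holds already in $\vlat{E}_\gamma$ (using that an element of a directed colimit is positive, respectively zero, precisely when some bonding map sends a representative to a positive, respectively zero, element), and only then applies interval preservation of $e_{\alpha,\gamma}$ to produce a preimage of $y$ in $[0,x]$. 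Point (b) is easier but still needs a line: a positive $u$ in the limit can be written as $e_\alpha(x_\alpha)$ with $x_\alpha\geq 0$ (replace $x_\alpha$ by $x_\alpha^+$), after which interval preservation of the compatible system's map at index $\alpha$ does the work. Without these two verifications the existence of $\ind{\cal{D}}$ in $\vlic$, as opposed to $\vlc$, is not established.
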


\begin{remark}\label{Remark:  Structure of direct limits}
The following facts from \cite[Appendix 2]{Dugundji1966} and \cite[Remark 3.7]{vanAmstelvanderwalt2022} will be used on a number of occasions.  Let $\cat$ be any of the categories in Table 1 and $\cal{D}=\left((O_\alpha)_{\alpha \in I},(e_{\alpha,\beta})_{\alpha \preceq \beta}\right)$ a direct system in $\cat$.  Suppose that $\left(O,(e_\alpha)_{\alpha\in I}\right)$ is a direct limit of $\cal{D}$ in $\cat$.  Then the following is true.
\begin{enumerate}
    \item[(i)] For every $x\in O$, there exists an $\alpha \in I$ and $x_\alpha \in O_\alpha$ so that $x=e_\alpha(x_\alpha)$.  If also $x = e_\beta (x_\beta)$ for some $\beta\in I$ and $x_\beta\in O_\beta$, then there exists $\gamma \succeq \alpha,\beta$ in $I$ so that $e_{\alpha , \gamma}(x_\alpha) = e_{\beta , \gamma}(x_\beta)$, and hence
        \[
        e_\gamma ( e_{\alpha , \gamma}(x_\alpha)) = x = e_\gamma ( e_{\beta , \gamma}(x_\beta)).
        \]
    \item[(ii)] If $P$ is a $\cat$-object and $f:O\to P$ is a map, then $f$ is a $\cat$-morphism if and only if $f\circ e_\alpha$ is a $\cat$-morphism for every $\alpha \in I$.
\end{enumerate}
\end{remark}

We end this section with a basic duality results for direct limits, given in \cite[Theorem 5.4]{vanAmstelvanderwalt2022}


\begin{thm}\label{Thm:  Dual of ind sys in VLIC is proj of duals in NVL}
Let $\cal{D}\defeq \left( (\vlat{E}_\alpha)_{\alpha\in I}, (e_{\alpha, \beta})_{\alpha\preceq\beta}\right)$ be a direct system in $\vlic$, and let $\cal{S} \defeq \left( \vlat{E},(e_\alpha)_{\alpha\in I}\right)$ be a direct limit of $\cal{D}$ in $\vlic$.  Define
\[
\cal{D}^\sim \defeq \left( (\vlat{E}^\sim_\alpha)_{\alpha\in I}, (e^\sim_{\alpha, \beta})_{\alpha\preceq\beta}\right)
\]
and
\[
\cal{S}^\sim \defeq \left( \vlat{E}^\sim, (e^\sim_{\alpha})_{\alpha\preceq\beta}\right).
\]
Then $\cal{D}^\sim$ is an inverse system in $\nvlc$, and $\proj{\cal{D}^\sim}= \cal{S}^\sim$ in $\nvlc$.
\end{thm}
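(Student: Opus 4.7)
The plan is to verify the claim in two stages. First, check that $\cal{D}^\sim$ is an inverse system in $\nvlc$ and that $\cal{S}^\sim$ is a compatible system of $\cal{D}^\sim$ in $\nvlc$. Both reduce to general facts about the order adjoint: since each $e_{\alpha,\beta}$ and each $e_\alpha$ is an interval preserving lattice homomorphism, parts~(i) and~(iii) of Theorem~\ref{Thm:  Adjoints of interval preserving vs lattice homomorphisms} give that $e_{\alpha,\beta}^\sim$ and $e_\alpha^\sim$ are positive, order continuous (hence normal) lattice homomorphisms. The commutativity identities defining the inverse system and the compatibility of $\cal{S}^\sim$ then follow from the contravariance of the order adjoint applied to the corresponding identities in $\cal{D}$ and $\cal{S}$.

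The main task is to verify the universal property of $\cal{S}^\sim$ in $\nvlc$. Take a compatible system $(F, (f_\alpha)_{\alpha\in I})$ of $\cal{D}^\sim$ in $\nvlc$, so that each $f_\alpha: F \to \orderdual{\vlat{E}_\alpha}$ is a normal lattice homomorphism and $f_\alpha = e_{\alpha,\beta}^\sim \circ f_\beta$ whenever $\alpha \preceq \beta$. For each $\xi \in F$, I would define a functional $s(\xi)$ on $\vlat{E}$ as follows: given $x \in \vlat{E}$, invoke Remark~\ref{Remark:  Structure of direct limits}(i) to choose $\alpha \in I$ and $x_\alpha \in \vlat{E}_\alpha$ with $x = e_\alpha(x_\alpha)$, and set $s(\xi)(x) \defeq f_\alpha(\xi)(x_\alpha)$. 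Well-definedness follows by passing two representatives $e_\alpha(x_\alpha) = e_\beta(x_\beta)$ to a common dominating index $\gamma$ (as permitted by Remark~\ref{Remark:  Structure of direct limits}(i)) and using the inverse-system compatibility $f_\alpha(\xi) = f_\gamma(\xi) \circ e_{\alpha,\gamma}$. Linearity in $x$ follows similarly by representing two elements over a common index.

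The step I expect to be the most delicate is showing that $s(\xi)\in\orderdual{\vlat{E}}$ and that $s:F\to\orderdual{\vlat{E}}$ is a normal lattice homomorphism; for all three properties, the interval preserving character of the $e_\alpha$ is decisive. For order boundedness, the identity $e_\alpha[[0, x_\alpha]] = [0, x]$ (valid once $x_\alpha \geq 0$, which can be arranged by replacing $x_\alpha$ with $x_\alpha \vee 0$ and using that $e_\alpha$ is a lattice homomorphism) lets one identify $s(\xi)$ on $[0, x]$ with $f_\alpha(\xi)$ on $[0, x_\alpha]$, which is bounded. For the lattice homomorphism property, applying the Riesz--Kantorovich formula to $(s(\xi) \vee s(\eta))(x)$ for $x \geq 0$ and using the same interval identity reduces the supremum over $[0, x]$ to the corresponding supremum over $[0, x_\alpha]$, which by the lattice homomorphism property of $f_\alpha$ equals $f_\alpha(\xi \vee \eta)(x_\alpha) = s(\xi \vee \eta)(x)$. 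Normality of $s$ follows by the same reduction: evaluate on positive $x$, use the normality of $f_\alpha$, and note that pointwise suprema on positive elements compute suprema in the order dual. Finally, uniqueness is immediate, since the relation $e_\alpha^\sim \circ s = f_\alpha$ pins down $s(\xi)$ on each $\operatorname{Im}(e_\alpha)$, and by Remark~\ref{Remark:  Structure of direct limits}(i) the union of these images is all of $\vlat{E}$.
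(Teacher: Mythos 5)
The paper does not prove this theorem at all: it is quoted verbatim from \cite[Theorem 5.4]{vanAmstelvanderwalt2022}, so there is no in-text argument to compare yours against. On its own merits your proof is correct and is the natural one: the construction of $s(\xi)$ via representatives $x=e_\alpha(x_\alpha)$, the use of Remark~\ref{Remark:  Structure of direct limits}(i) for well-definedness and linearity, and above all the reduction of the Riesz--Kantorovich supremum over $[0,x]$ to one over $[0,x_\alpha]$ via $e_\alpha[[0,x_\alpha]]=[0,x]$ are exactly where the interval-preserving hypothesis enters, and you have identified and used it correctly in all three places (order boundedness, the lattice homomorphism property, and normality). Two small points you pass over but which are routine: normality as defined in the paper concerns arbitrary subsets admitting a supremum, so one should reduce to upward directed sets by passing to finite suprema (harmless, since $s$ is a lattice homomorphism and $\orderdual{\vlat{E}}$ is Dedekind complete); and the appeal to Theorem~\ref{Thm:  Adjoints of interval preserving vs lattice homomorphisms} is nominally restricted to Archimedean lattices, whereas the objects of $\vlic$ need not be Archimedean --- the Riesz--Kantorovich computation you carry out by hand covers this anyway, so nothing is lost.
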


\section{A representation theorem for $\contX^{\sim\sim}$ for realcompact $X$}\label{Section:  A representation theorem for C(X) order bidual}

We will now start with the development of preparatory material in Sections~\ref{Subsection:  Direct limits of spaces of measures} and~\ref{Subsection:  Inverse limits of C(X)-spaces}, before establishing our main result in Section~\ref{Subsection:  A representation theorem}.

\subsection{Direct limits of spaces of measures}\label{Subsection:  Direct limits of spaces of measures}

Our first preparatory topic consists of direct limits of spaces of measures. We show, amongst others, that the order dual of $\contX$ for realcompact $X$ is the direct limit of spaces of measures on compact subsets of $X$; see Corollary~\ref{Cor: Order dual of C(X) is direct limit}.

Let $X$ be a topological space and $K\subseteq X$ a compact set.  For $\mu\in\measures(K)$, define
\[
\mu^{(K,X)}(B) \defeq \mu (B\cap K),~~ B\in\borel{X}.\]
For $\nu\in \measures(X)$, define
\[
\nu_{|K}(B) \defeq \nu (B),~~ B\in\borel{K}.
\]

\begin{prop}\label{Prop:  Extension and retraction of measures}
Let $X$ be a topological space and $K\subseteq X$ a compact set. The following statements are true.
\begin{enumerate}
    \item[(i)] If $\mu\in \measures(K)$, then $\mu^{(K,X)}\in \measuresc(X)$ and $S_{\mu^{(K,X)}}\subseteq K$.
    \item[(ii)] If $\nu\in\measures(X)$, then $\nu_{|K}\in\measures(K)$.
    \item[(iii)] The map $T:\measures(K)\ni \mu\mapsto \mu^{(K,X)}\in \measuresc(X)$ is injective and interval preserving, and a lattice isomorphism onto the ideal $T[\measures(K)]$ in $\measuresc(X)$.
    \item[(iv)] $T[\measures(K)]$ is, in fact, a band in $\measuresc(X)$. More precisely,
    \[
    T[\measures(K)]=\{\nu \in \measuresc(X) ~:~ S_\nu \subseteq K\}.
	\]
    \item[(v)] $T^\sim : \orderdual{\measuresc(X)}\to \orderdual{\measures(K)}$ is surjective.
\end{enumerate}
\end{prop}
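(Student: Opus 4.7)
The five statements divide into measure-theoretic verifications (parts (i) and (ii)), algebraic/lattice properties of $T$ (parts (iii) and (iv)), and the concluding surjectivity through band projection (part (v)). I will treat them in order. In (i), $\sigma$-additivity of $\mu^{(K,X)}$ is immediate since $(B_n\cap K)$ is a pairwise disjoint sequence in $\borel{K}$ whenever $(B_n)$ is pairwise disjoint in $\borel X$. For the Radon property, given $B\in\borel X$ and $\varepsilon>0$, I apply the Radon property of $\mu$ to $B\cap K\in\borel K$ to produce a compact $C\subseteq B\cap K\subseteq B$ with $|\mu|((B\cap K)\mysetminus C)<\varepsilon$; since $|\mu^{(K,X)}|(B\mysetminus C)=|\mu|((B\mysetminus C)\cap K)=|\mu|((B\cap K)\mysetminus C)$, the bound transfers. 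For the support inclusion, any $x\notin K$ admits an open neighbourhood $U$ disjoint from $K$ (as $K$ is closed), so $|\mu^{(K,X)}|(U)=|\mu|(U\cap K)=0$, giving $x\notin S_{\mu^{(K,X)}}$; in particular $S_{\mu^{(K,X)}}\subseteq K$ is compact, so $\mu^{(K,X)}\in\measuresc(X)$. For (ii), closedness of $K$ gives $\borel{K}\subseteq\borel{X}$, so $\nu_{|K}$ is a well-defined Borel measure; $\sigma$-additivity is trivial, and Radon-ness follows from that of $\nu$ after noting that any compact $C\subseteq B\subseteq K$ chosen in $X$ is compact in $K$.

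For (iii), injectivity of $T$ is immediate since $T\mu=0$ forces $\mu(B)=\mu^{(K,X)}(B)=0$ for every $B\in\borel{K}\subseteq\borel X$. The interval preservation is the key step: given $0\leq\nu\leq T\mu=\mu^{(K,X)}$ in $\measuresc(X)$, the identity $\mu^{(K,X)}(B\mysetminus K)=\mu((B\mysetminus K)\cap K)=0$ forces $\nu(B\mysetminus K)=0$, so $\nu(B)=\nu(B\cap K)=T(\nu_{|K})(B)$ for every $B\in\borel X$. Since $\nu_{|K}(B)=\nu(B)\leq\mu(B)$ for $B\in\borel K$, we have $\nu_{|K}\in[0,\mu]$, establishing $[0,T\mu]\subseteq T[[0,\mu]]$; the reverse inclusion is monotonicity. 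Combining injectivity and interval preservation with the result from \cite{vanAmstelvanderwalt2022} cited in Section~\ref{Subsection:  Lattice homomorphisms} shows $T$ is a lattice isomorphism onto the ideal $T[\measures(K)]$.

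For (iv), the inclusion $T[\measures(K)]\subseteq\{\nu\in\measuresc(X):S_\nu\subseteq K\}$ is (i). Conversely, if $S_\nu\subseteq K$, then $|\nu|(B\mysetminus K)=0$ for every $B\in\borel X$ because $B\mysetminus K$ is disjoint from $S_\nu$; hence $\nu(B)=\nu(B\cap K)=T(\nu_{|K})(B)$, so $\nu\in T[\measures(K)]$. That this set is an ideal follows from Proposition~\ref{Prop:  Properties of support of a measure}(i). For order-closedness it suffices, in the Dedekind complete space $\measuresc(X)$, to check suprema of upward directed positive subsets: if $0\leq\nu_\alpha\uparrow\nu$ with $S_{\nu_\alpha}\subseteq K$, then for any $x\notin K$ and any open $U\ni x$ disjoint from $K$, the explicit supremum formula recalled in Section~\ref{Subsection:  Realcompact spaces and the order dual of C(X)} gives $\nu(U)=\sup_\alpha\nu_\alpha(U)=0$, so $x\notin S_\nu$. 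Hence the set is a band.

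For (v), the Dedekind completeness of $\measuresc(X)$ together with (iv) makes $T[\measures(K)]$ a projection band; write $\measuresc(X)=T[\measures(K)]\oplus T[\measures(K)]^d$ with band projection $P$ onto the first summand. Given $\varphi\in\orderdual{\measures(K)}$, define
\[
\tilde\varphi(\nu)\defeq\varphi\bigl(T^{-1}(P\nu)\bigr),\quad \nu\in\measuresc(X),
\]
which is linear and order bounded because $P$ is a positive projection and $T^{-1}$ is a positive lattice isomorphism from $T[\measures(K)]$ onto $\measures(K)$. For any $\mu\in\measures(K)$, $T\mu\in T[\measures(K)]$ so $PT\mu=T\mu$, and thus $T^\sim\tilde\varphi(\mu)=\tilde\varphi(T\mu)=\varphi(\mu)$, proving surjectivity. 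The only genuinely delicate step is the concentration argument in (iii), namely that a measure dominated by $\mu^{(K,X)}$ must live on $K$; once that is secured, (iv) and (v) follow cleanly from standard band theory in a Dedekind complete vector lattice.
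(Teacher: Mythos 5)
Your proof is correct and follows essentially the same route as the paper's: the same restriction map $\nu\mapsto\nu_{|K}$ as inverse on the ideal, the same concentration argument for interval preservation, the same directed-supremum criterion for the band property, and the same band-projection construction $\varphi\circ T^{-1}\circ P$ for surjectivity of $T^\sim$. The only cosmetic differences are that you invoke the (true, easily checked) identity $|\mu^{(K,X)}|(B)=|\mu|(B\cap K)$ where the paper instead splits $\mu$ into positive and negative parts, and in (iii) you deduce $\nu(B\mysetminus K)=0$ directly from domination rather than via the support proposition.
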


\begin{proof}
For the proof of (i), fix $\mu\in\measures(K)$.  It is clear that $\mu^{(K,X)}$ is a Borel measure on $X$.  We show that $\mu^{(K,X)}$ is a Radon measure.  For the moment, suppose that $\mu\geq 0$.  Fix some Borel set $B$ in $X$ and a real number $\varepsilon>0$.  Then $B\cap K\in\borel{K}$; see \cite[Vol.\ II, Lemma 6.2.4]{Bogachev2007}.  There exists a compact set $C\subseteq B\cap K\subseteq B$ so that $\mu((B\cap K)\setminus C)<\varepsilon$.  But $(B\setminus C)\cap K = (B\cap K)\setminus C$ so that $\mu^{(K,X)}(B\setminus C) = \mu((B\setminus C)\cap K) = \mu((B\cap K)\setminus C)<\varepsilon$.  Therefore $\mu^{(K,X)}\in \measures(X)$.

Let $\mu$ be an arbitrary Radon measure on $K$.  Then $\mu = \mu^+ - \mu^-$ with both $\mu^+$ and $\mu^-$ positive Radon measures on $K$.  Consequently, by the above argument, $\mu^{+ (K,X)}$ and $\mu^{- (K,X)}$ are Radon measures on $X$.  But $\mu^{(K,X)}= \mu^{+ (K,X)} - \mu^{- (K,X)}$, as one may readily verify, so that $\mu^{(K,X)}\in \measures(X)$.

We claim that $S_{\mu^{(K,X)}}\subseteq K$. Let $x\in X\setminus K$.  Since $X$ is a Hausdorff space, there exists an open neighbourhood $V$ of $x$ which is disjoint from $K$.  For such a $V$,
\[
\left(\mu^{(K,X)}\right)^+(V) = \sup\{\mu(A\cap K) ~:~ A\in\borel{X},~ A\subseteq V\}=0
\]
and
\[
\left(\mu^{(K,X)}\right)^-(V) = \sup\{-\mu(A\cap K) ~:~ A\in\borel{X},~ A\subseteq V\}=0
\]
so that $|\mu^{(K,X)}|(V)=0$.  Hence $x\in X\setminus S_{\mu^{(K,X)}}$, showing that $S_{\mu^{(K,X)}}\subseteq K$, as claimed. It is now clear that $S_{\mu^{(K,X)}}$ is compact.

For (ii), let $\nu\in\measures(X)$.  We note that $\borel{K}=\{B\cap K ~:~ B\in\borel{X}\}\subseteq \borel{X}$; see \cite[Vol.\ II, Lemma 6.2.4]{Bogachev2007}.  Hence $\nu_{|K}$ is the restriction of $\nu$ to $\borel{K}$.  It is now immediate that $\nu_{|K}$ is a Radon measure on $K$.

We proceed with the proof of (iii).  It is clear that $T$ is linear and positive.  To see that $T$ is injective, let $T(\mu) = \mu^{(K,X)}=0$ for some $\mu\in\measures(K)$.  For $B\in\borel{K}\subseteq \borel{X}$ we have $\mu(B) = \mu(B\cap K)=T(\mu)(B)=0$ so that $\mu=0$.

To see that $T$ is interval preserving, let $0\leq \mu\in\measures(K)$ and $\nu\in\measuresc(X)$ satisfy $0\leq \nu\leq T(\mu)$.  We show that $0\leq \nu_{|K}\leq \mu$ and $T(\nu_{|K})=\nu$.  For every $B\in\borel{K}\subseteq \borel{X}$, $0\leq \nu_{|K}(B)=\nu(B) \leq T(\mu)(B) = \mu(B\cap K)=\mu(B)$ so that $0\leq \nu_{|K}\leq \mu$.  If $A\in\borel{X}$, then $T(\nu_{|K})(A) = \nu_{|K}(A\cap K) = \nu(A\cap K)$.  Since $0\leq \nu\leq T(\mu)$ and $S_{T(\mu)}\subseteq K$ by (i), it follows from Proposition \ref{Prop:  Properties of support of a measure} (i) that $S_{\nu}\subseteq K$.  Therefore $\nu(A)=\nu(A\cap K)=T(\nu_{|K})(A)$, showing that $T(\nu_{|K})=\nu$, as claimed.

As already mentioned in the introduction, the fact that $T$ is injective and interval preserving implies that it is a lattice isomorphism onto an ideal in $\measuresc(X)$.

We now prove (iv).  It follows from (i) that $T[\measures(K)]\subseteq \{\nu \in \measuresc(X) ~:~ S_\nu \subseteq K\}$.  For the reverse inclusion, first take $0\leq \nu \in \{\nu \in \measuresc(X) ~:~ S_\nu \subseteq K\}$.  Then $\nu_{|K}\in \measures(K)$ by (ii) and, for $B\in\borel{X}$, $T(\nu_{|K})(B) = \nu_{|K}(B\cap K) = \nu(B\cap K) = \nu(B)$; the final identity follows from the fact that $S_\nu \subseteq K$.  Hence $\nu\in T[\measures(K)]$. For general $\nu \in \{\nu \in \measuresc(X) ~:~ S_\nu \subseteq K\}$, this then follows upon decomposing $\nu$ into its positive and negative parts.  Therefore $T[\measures(K)]\subseteq \{\nu \in \measuresc(X) ~:~ S_\nu \subseteq K\}$.

   It remains to show that it is a band, i.e., that $\{\nu \in \measuresc(X) ~:~ S_\nu \subseteq K\}$ is a band in $\measuresc(X)$.  Suppose that $D\subseteq  \{\nu \in \measuresc(X) ~:~ S_\nu \subseteq K\}^+$ and $D \uparrow \mu$ in $\measuresc(X)$.  Then $\mu(X\setminus K) = \sup\{\nu(X\setminus K) ~:~ \nu\in D\}=0$ so that $S_\mu \subseteq K$; that is, $\mu\in\{\nu \in \measuresc(X) ~:~ S_\nu \subseteq K\}$.

That (v) is true follows from (iii) and (iv).  Indeed, let $P$ be the band projection of $\measuresc(X)$ onto $T[\measures(K)]$. For $\varphi \in \orderdual{\measuresc(K)}$, define $\psi \defeq  \varphi \circ T^{-1}\circ P$.  Then  $\psi\in\orderdual{\measuresc(X)}$ and $T^\sim(\psi) = \varphi$.
\end{proof}

The following result will be used repeatedly in the proof of our main result, Theorem \ref{Thm:  Main result}.

\begin{prop}\label{Prop:  Measure extension operator adjoint}
Let $X$ be a realcompact space, and let $K$ be a compact subset of $X$. Consider the map $T:\measures(K)\ni \mu\mapsto \mu^{(K,X)}\in\measuresc(X)$. Identify $\obidual{\contX}$ with $\orderdual{\measuresc(X)}$ and $\obidual{\contK}$ with $\orderdual{\measures(K)}$.  Denote by $\sigma_X:\contX\to \orderdual{\measuresc(X)}$ and $\sigma_K:\contK\to \orderdual{\measures(K)}$ the canonical embeddings of $\contX$ and $\contK$ into their respective order biduals.  Then $T^\sim(\sigma_X(\onefunction_X))=\sigma_K(\onefunction_K)$.
\end{prop}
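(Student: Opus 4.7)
The plan is to prove the equality by evaluating both functionals on an arbitrary $\mu \in \measures(K)$ and checking that both sides produce $\mu(K)$. The whole argument reduces to unwinding the definitions and the identifications given in Theorem~\ref{Thm:  Riesz Representation Theorem for C(X)}.

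First I would make the identifications explicit. Under the lattice isomorphism $\measuresc(X) \cong \orderdual{\contX}$ via $\nu \mapsto \varphi_\nu$, and its analogue for $K$, the canonical embedding $\sigma_X : \contX \to \obidual{\contX} \cong \orderdual{\measuresc(X)}$ is given by
\[
\sigma_X(u)(\nu) = \varphi_\nu(u) = \int_X u \di\nu, \qquad \nu \in \measuresc(X).
\]
Setting $u = \onefunction_X$ gives $\sigma_X(\onefunction_X)(\nu) = \nu(X)$ for every $\nu \in \measuresc(X)$; entirely analogously, $\sigma_K(\onefunction_K)(\mu) = \mu(K)$ for every $\mu \in \measures(K)$.

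Next I would apply the definition of the order adjoint and the defining formula for $T$. For any $\mu \in \measures(K)$,
\[
T^\sim(\sigma_X(\onefunction_X))(\mu) = \sigma_X(\onefunction_X)(T(\mu)) = T(\mu)(X) = \mu^{(K,X)}(X) = \mu(X \cap K) = \mu(K),
\]
where the second-to-last equality is just the definition of $\mu^{(K,X)}$ from the paragraph preceding Proposition~\ref{Prop:  Extension and retraction of measures}. Comparing with the formula for $\sigma_K(\onefunction_K)(\mu)$ above shows that the two functionals agree on every $\mu \in \measures(K)$, whence they are equal.

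There is essentially no obstacle here; the statement is a bookkeeping exercise. The one point that needs care is to keep the three layers of identification straight — namely that $\sigma_X(\onefunction_X)$, once transported through $\orderdual{\contX} \cong \measuresc(X)$, is nothing but the total-mass functional $\nu \mapsto \nu(X)$ on $\measuresc(X)$, so evaluating it at the extended measure $\mu^{(K,X)}$ is the same as computing the total mass of $\mu^{(K,X)}$, which by construction is concentrated on $K$.
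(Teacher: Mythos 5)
Your proposal is correct and follows essentially the same route as the paper's own proof: evaluate both functionals at an arbitrary $\mu\in\measures(K)$, use the definition of the order adjoint and the identification $\sigma_X(\onefunction_X)(\nu)=\nu(X)$ to reduce both sides to $\mu(K)$. The explicit unwinding of the identifications is a helpful addition but does not change the argument.
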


\begin{proof}
Let $\mu \in \measures(K)$.  Then
\[
T^\sim(\sigma_X(\onefunction_{X}))(\mu) = \displaystyle \int_{X} \onefunction_{X}\di T(\mu)= T(\mu)(X).
\]
But by definition of $T$, $T(\mu)(X)=\mu(K)$.  Therefore
\[
T^\sim(\sigma_X(\onefunction_{X}))(\mu)= \mu(K) = \int_{K}\onefunction_{K}\di\mu = \sigma_{K}(\onefunction_{K})(\mu).
\]
Hence $T^\sim(\sigma_X(\onefunction_X))=\sigma_K(\onefunction_K)$.
\end{proof}

We  now define the principal object of this subsection.

\begin{defn}\label{Def:  Inverse system of measures}
Let $X$ be a topological space, and let $\cal{K}=\{K_\alpha ~:~ \alpha\in I\}$ be the collection of all nonempty and compact subsets of $X$, indexed by a directed set $I$ so that $K_\alpha \subseteq K_\beta$ if and only if $\alpha \preceq \beta$.  For $\alpha \preceq \beta$ in $I$, define $T_{\alpha , \beta}: \measures(K_\alpha)\to \measures(K_\beta)$ by setting $T_{\alpha,\beta}(\mu) \defeq \mu^{(K_\alpha,K_\beta)}$ for $\mu\in\measures(K_\alpha)$.  Likewise, for $\alpha\in I$, define $T_\alpha:\measures(K_\alpha)\to \measuresc(X)$ by setting $T_\alpha(\mu)\defeq  \mu^{(K_\alpha,X)}$ for $\mu\in\measures(K_\alpha)$.
Finally, define
\[
\cal{D}_{X}\defeq \left(\left(\measures(K_\alpha)\right)_{\alpha\in I},\left(T_{\alpha,\beta}\right)_{\alpha \preceq \beta}\right)
\]
and
\[
\cal{S}_{X}\defeq \left(\measuresc(X),\left(T_\alpha\right)_{\alpha\in I}\right).
\]
\end{defn}

Naturally, one can take $\cal{K}$ for $I$. Introducing $I$ and indices $\alpha$, however, yields a notation that fits better with that in other results later on.

\begin{prop}\label{Prop:  Direct systems of Radon measures}
Let $X$ be a topological space.  The following statements are true.
\begin{enumerate}
	\item[(i)] $\cal{D}_{X}$ is a direct system in $\vlic$.
	\item[(ii)] $\cal{S}_{X} = \ind{\cal{D}_{X}}$ in $\vlic$.
\end{enumerate}
\end{prop}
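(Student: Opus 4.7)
The plan is to reduce both parts to Proposition~\ref{Prop:  Extension and retraction of measures}, which already handles the single extension map between a compact subset and an ambient space. For~(i), each $T_{\alpha,\beta}$ is precisely the map $T$ of that proposition taken with ambient space $K_\beta$ and compact subset $K_\alpha$, hence an injective interval preserving lattice homomorphism and therefore a $\vlic$-morphism. That $T_{\alpha,\alpha}=\mathrm{id}$ and $T_{\beta,\gamma}\circ T_{\alpha,\beta}=T_{\alpha,\gamma}$ for $\alpha\preceq\beta\preceq\gamma$ are immediate from the elementary identity $B\cap K_\beta\cap K_\alpha=B\cap K_\alpha$ whenever $K_\alpha\subseteq K_\beta$. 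The same one-line calculation gives the compatibility $T_\beta\circ T_{\alpha,\beta}=T_\alpha$ needed in~(ii), and each $T_\alpha$ is likewise a morphism in $\vlic$ by Proposition~\ref{Prop:  Extension and retraction of measures}.

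The crux of~(ii) is the universal property. Given a compatible system $(\vlat{F},(S_\alpha)_{\alpha\in I})$ of $\cal{D}_X$ in $\vlic$, I would build the required $R:\measuresc(X)\to\vlat{F}$ using what I expect to be the key observation of the proof: by Proposition~\ref{Prop:  Extension and retraction of measures}~(iv),
\[
\measuresc(X)=\bigcup_{\alpha\in I}T_\alpha[\measures(K_\alpha)],
\]
because the compact support of any $\nu\in\measuresc(X)$ is contained in some $K_\alpha$, so that $\nu=T_\alpha(\nu_{|K_\alpha})$. This suggests defining $R(\nu):=S_\alpha(\mu)$ whenever $\nu=T_\alpha(\mu)$. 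The main obstacle, and essentially the only nontrivial step, is well-definedness: if $\nu=T_\alpha(\mu)=T_\beta(\mu')$, I would pick $\gamma\succeq\alpha,\beta$ in $I$, use injectivity of $T_\gamma$ together with the already-proved compatibility of $\cal{S}_X$ to deduce $T_{\alpha,\gamma}(\mu)=T_{\beta,\gamma}(\mu')$, and then apply the compatibility of $(\vlat{F},(S_\alpha))$ to conclude $S_\alpha(\mu)=S_\beta(\mu')$.

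Once $R$ is well-defined, the remaining checks are routine. Linearity and the lattice homomorphism property follow by choosing, for any finite list of elements of $\measuresc(X)$ appearing in a computation, a single $\gamma\in I$ such that all of them lie in $T_\gamma[\measures(K_\gamma)]$, and then transferring the operation to $\measures(K_\gamma)$ via the lattice isomorphism $T_\gamma$ of Proposition~\ref{Prop:  Extension and retraction of measures}~(iii) and applying the corresponding property of $S_\gamma$. For interval preservation, given $\nu\in\measuresc(X)^+$ and $0\le\eta'\le R(\nu)$ in $\vlat{F}$, I would write $\nu=T_\alpha(\mu)$ with $\mu\ge 0$, apply the interval preservation of $S_\alpha$ to obtain $\mu'\in[0,\mu]$ with $S_\alpha(\mu')=\eta'$, and then take $T_\alpha(\mu')\in[0,\nu]$ as the desired preimage in $\measuresc(X)$. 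Uniqueness of $R$ is forced by $R\circ T_\alpha=S_\alpha$ together with the covering identity above, completing the verification of the universal property.
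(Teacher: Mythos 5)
Your argument is correct, but it takes a genuinely different route from the paper's for part (ii). The paper does not verify the universal property of $\cal{S}_{X}$ directly: it first invokes the existence theorem for direct limits in $\vlic$ (Theorem~\ref{Thm:  Existence of limits}~(i)) to obtain an abstract limit $\left(\vlat{F},(S_\alpha)_{\alpha\in I}\right)$ of $\cal{D}_{X}$, then uses the universal property of \emph{that} limit to produce the unique $\vlic$-morphism $T:\vlat{F}\to\measuresc(X)$ with $T\circ S_\alpha=T_\alpha$, and finally shows $T$ is bijective \textemdash injectivity via Remark~\ref{Remark:  Structure of direct limits}~(i) and the injectivity of the $T_\alpha$, surjectivity via the same covering fact you use, namely $\mu=T_\alpha(\mu_{|K_\alpha})$ for any $K_\alpha\supseteq S_\mu$. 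You instead build the mediating morphism $R:\measuresc(X)\to\vlat{F}$ into an \emph{arbitrary} compatible system by hand, which forces you to check well-definedness, the morphism properties, and uniqueness explicitly. Your well-definedness argument (pass to $\gamma\succeq\alpha,\beta$, use injectivity of $T_\gamma$ and compatibility) is sound, as are the transfers of linearity, the lattice operations, and interval preservation to a common $\measures(K_\gamma)$; for interval preservation note that the inclusion $R[[0,\nu]]\subseteq[0,R(\nu)]$ still needs the (easy) positivity of $R$. What each approach buys: the paper's is shorter because the hard categorical work is outsourced to the existence theorem and to the structural description of elements of a $\vlic$-direct limit, while yours is self-contained and in fact reproves existence of $\ind{\cal{D}_{X}}$ for this particular system without appealing to Theorem~\ref{Thm:  Existence of limits}~(i), at the cost of more bookkeeping.
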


\begin{proof}
It follows immediately from Proposition \ref{Prop:  Extension and retraction of measures} (iii) and the definitions of the $T_{\alpha,\beta}$ and $T_\alpha$, respectively, that $\cal{D}_{X}$ is a direct system in $\vlic$ and that $\cal{S}_{X}$ is a compatible system of $\cal{D}_{X}$ in $\vlic$.  We show that $\cal{S}_{X} = \ind{\cal{D}_{X}}$ in $\vlic$.

It follows from Theorem \ref{Thm:  Existence of limits} (i) that $\ind{\cal{D}_{X}}$ exists in $\vlic$.  Take such a direct limit $\left(\vlat{F},(S_\alpha)_{\alpha\in I}\right)$.  Because $\cal{S}_{X}$ is a compatible system of $\cal{D}_{X}$ in $\vlic$, there exists a unique interval preserving lattice homomorphism $T:\vlat{F}\to \measuresc(X)$ so that $T_\alpha = T\circ S_\alpha$ for every $\alpha \in I$.  We claim that $T$ is a lattice isomorphism.  From this it follows that $\cal{S}_{X} = \ind{\cal{D}_{X}}$ in $\vlic$.

It suffices to show that $T$ is bijective.  Suppose that $T(u)=0$ for some $u\in\vlat{F}$.  According to Remark \ref{Remark:  Structure of direct limits} (i), there exists an $\alpha \in I$ and $\mu_\alpha \in \measures(K_\alpha)$ so that $u=S_\alpha(\mu_\alpha)$.  Then $T_\alpha(\mu_\alpha) = T(u)=0$, so that $\mu_\alpha=0$ by Proposition \ref{Prop:  Extension and retraction of measures} (iii).  Therefore $u=S_\alpha(\mu_\alpha)=0$, so that $T$ is injective.

Take $\mu\in\measuresc(X)$, and let $K_\alpha$ be a compact subset of $X$ so that $S_{\mu}\subseteq K_\alpha$.  It follows from Proposition \ref{Prop:  Extension and retraction of measures} (ii) that $\mu_{|K_\alpha}\in \measures(K_\alpha)$.  Let $B\in \borel{X}$.  Then, since $S_\mu\subseteq K_\alpha$, $T_\alpha(\mu_{|K_\alpha})(B) = \mu_{|K_\alpha}(B\cap K_\alpha) = \mu(B\cap K_\alpha) = \mu(B)$.  Therefore $T_\alpha (\mu_{|K_\alpha}) = \mu$.  From this it follows that $\mu=T_\alpha(\mu_{|K_\alpha}) = T(S_\alpha(\mu_{|K_\alpha}))$. Hence $T$ is surjective, which completes the proof.
\end{proof}

In view of Theorem~\ref{Thm:  Riesz Representation Theorem for C(X)}, the previous result has the following consequence.

\begin{cor}\label{Cor: Order dual of C(X) is direct limit}
Let $X$ be a realcompact space.  Then $\orderdual{\contX}$ is the direct limit of $\cal{D}_{X}$ in $\vlic$.
\end{cor}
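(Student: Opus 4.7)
The plan is to simply combine the two results that precede the statement: Proposition~\ref{Prop:  Direct systems of Radon measures}, which identifies $(\measuresc(X),(T_\alpha)_{\alpha\in I})$ as a direct limit of $\cal{D}_X$ in $\vlic$, and Theorem~\ref{Thm:  Riesz Representation Theorem for C(X)}, which (for realcompact $X$) gives a lattice isomorphism $\Phi_X:\measuresc(X)\to\orderdual{\contX}$ via $\mu\mapsto\varphi_\mu$. The idea is then to transport the direct-limit structure across $\Phi_X$.

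The small point to check is that $\Phi_X$ is an isomorphism in $\vlic$, not merely in $\vlc$. This is a general fact about bijective lattice homomorphisms: if $0\leq u$ and $0\leq v\leq \Phi_X(u)$, then $\Phi_X^{-1}(v)$ is positive, bounded above by $u$, and mapped to $v$, so $\Phi_X[[0,u]]=[0,\Phi_X(u)]$; the same argument works for $\Phi_X^{-1}$. Hence both $\Phi_X$ and $\Phi_X^{-1}$ are interval preserving lattice homomorphisms.

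Once this is in place, I would define the compatible system $(\orderdual{\contX},(\Phi_X\circ T_\alpha)_{\alpha\in I})$ of $\cal{D}_X$ in $\vlic$ and verify the universal property directly: for any compatible system $(O',(e_\alpha')_{\alpha\in I})$ of $\cal{D}_X$ in $\vlic$, Proposition~\ref{Prop:  Direct systems of Radon measures} provides a unique $\vlic$-morphism $S:\measuresc(X)\to O'$ with $S\circ T_\alpha = e_\alpha'$ for all $\alpha$, and then $r\defeq S\circ\Phi_X^{-1}$ is the unique $\vlic$-morphism $\orderdual{\contX}\to O'$ with $r\circ(\Phi_X\circ T_\alpha)=e_\alpha'$. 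So $(\orderdual{\contX},(\Phi_X\circ T_\alpha)_{\alpha\in I})$ is a direct limit of $\cal{D}_X$ in $\vlic$, and in the convention introduced after Theorem~\ref{Thm:  Existence of limits} we may say that $\orderdual{\contX}$ itself is this direct limit.

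There is really no obstacle here; the content has been packaged into Proposition~\ref{Prop:  Direct systems of Radon measures} and Theorem~\ref{Thm:  Riesz Representation Theorem for C(X)}, and the corollary is a bookkeeping step that records the natural direct-limit presentation of $\orderdual{\contX}$ that will be dualised in Section~\ref{Subsection:  A representation theorem} via Theorem~\ref{Thm:  Dual of ind sys in VLIC is proj of duals in NVL}.
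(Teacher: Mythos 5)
Your argument is correct and matches the paper's: the corollary is obtained exactly by combining Proposition~\ref{Prop:  Direct systems of Radon measures} with the lattice isomorphism $\measuresc(X)\cong\orderdual{\contX}$ from Theorem~\ref{Thm:  Riesz Representation Theorem for C(X)} (the paper states this without further detail). Your extra check that a bijective lattice isomorphism is automatically an isomorphism in $\vlic$ is a sound and worthwhile piece of bookkeeping that the paper leaves implicit.
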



We collect one more consequence of Proposition~\ref{Prop:  Direct systems of Radon measures}, using the notation of Theorem~\ref{Thm:  Dual of ind sys in VLIC is proj of duals in NVL}.

\begin{cor}\label{Cor:  Order dual of compactly supported Radon measures}
Let $X$ be a topological space.  Then $\cal{D}_{X}^\sim$ is an inverse system in $\nvlc$, and $(\orderdual{\measuresc(X)},(T^\sim_\alpha)_{\alpha \in I})=\proj{\cal{D}_{X}^\sim}$ in $\nvlc$.  Every functional $\varphi \in \orderdual{\measuresc(X)}$ is order continuous.
\end{cor}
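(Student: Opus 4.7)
The plan is to derive the first two assertions at once from the duality machinery of Section~\ref{Subsection:  Direct and inverse limits}, and to handle the order continuity claim separately by a localisation-to-compact-support argument. For the first two, I would feed the identification $\cal{S}_X = \ind \cal{D}_X$ in $\vlic$ from Proposition~\ref{Prop:  Direct systems of Radon measures} into Theorem~\ref{Thm:  Dual of ind sys in VLIC is proj of duals in NVL}. This immediately yields that $\cal{D}_X^\sim$ is an inverse system in $\nvlc$ and that $\cal{S}_X^\sim = \left(\orderdual{\measuresc(X)},(T_\alpha^\sim)_{\alpha\in I}\right)$ is its inverse limit in $\nvlc$, with no further work.

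For the order continuity, fix $\varphi \in \orderdual{\measuresc(X)}$; by decomposing $\varphi = \varphi^+ - \varphi^-$ I may assume $\varphi \geq 0$. Given any $D \downarrow 0$ in $\measuresc(X)$, I would fix some $\mu_0 \in D$, set $K \defeq S_{\mu_0}$ (compact, as $\mu_0\in\measuresc(X)$), and pass to the cofinal tail $D_0 \defeq \{\mu \in D : \mu \leq \mu_0\}$, which still satisfies $D_0 \downarrow 0$. By Proposition~\ref{Prop:  Properties of support of a measure}~(i), every $\mu \in D_0$ has $S_\mu \subseteq K$, so by Proposition~\ref{Prop:  Extension and retraction of measures}~(iv), $D_0$ lies entirely in the band $T_\alpha[\measures(K_\alpha)]$ for the index $\alpha$ with $K_\alpha = K$. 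Pulling $D_0$ back through the lattice isomorphism of Proposition~\ref{Prop:  Extension and retraction of measures}~(iii) gives a set $D_0' \downarrow 0$ in $\measures(K_\alpha)$, and the restriction of $\varphi$ to that band corresponds to a positive functional $\psi \in \orderdual{\measures(K_\alpha)}$.

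The last step invokes classical Banach lattice theory: for the compact Hausdorff space $K_\alpha$, $\measures(K_\alpha)$ is isometrically lattice isomorphic to the norm dual of $\cont(K_\alpha)$ and is hence an AL-space, whose order dual coincides with its order continuous dual. Consequently $\inf \psi[D_0'] = 0$, which transports back to $\inf \varphi[D_0] = 0$, and cofinality of $D_0$ in $D$ yields $\inf \varphi[D] = 0$, as required. I expect the main obstacle to be precisely this last piece: the first two assertions are a formal dualisation, whereas the order continuity claim needs both the support-localisation (to reduce an arbitrary decreasing net to one living in a fixed band) and the AL-space property of $\measures(K)$ for compact $K$ (to ensure that every order-bounded functional on that band is automatically order continuous).
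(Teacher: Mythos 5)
Your proposal is correct and follows essentially the same route as the paper: the first two assertions are obtained by feeding Proposition~\ref{Prop:  Direct systems of Radon measures} into Theorem~\ref{Thm:  Dual of ind sys in VLIC is proj of duals in NVL}, and order continuity is proved by localising a net $D\downarrow 0$ into a single band $T_\alpha[\measures(K_\alpha)]$ and invoking the AL-space property of $\measures(K_\alpha)$ for compact $K_\alpha$. The only (immaterial) difference is the localisation mechanism: you pass to the tail below a fixed $\mu_0\in D$ and use the support characterisation of the band from Proposition~\ref{Prop:  Extension and retraction of measures}~(iv), whereas the paper assumes $D$ bounded above and uses Remark~\ref{Remark:  Structure of direct limits}~(i) together with interval preservation of $T_\beta$.
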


\begin{proof}
It follows immediately from Proposition \ref{Prop:  Direct systems of Radon measures} and Theorem \ref{Thm:  Dual of ind sys in VLIC is proj of duals in NVL} that $\cal{D}_{X}^\sim$ is an inverse system in $\nvlc$, and that $(\orderdual{\measuresc(X)},(T^\sim_\alpha)_{\alpha \in I})=\proj{\cal{D}_{X}^\sim}$.  It remains to show that every functional $\varphi \in \orderdual{\measuresc(X)}$ is order continuous.

Let $D\downarrow 0$ in $\measuresc(X)$.  Without loss of generality, we may suppose that $D$ is bounded from above.  Since the $T_\alpha$ are interval preserving, it follows from Remark \ref{Remark:  Structure of direct limits} (i) that there exists a $\beta\in I$ so that $D\subseteq T_\beta[\measures(K_\beta)]$.  Let $D_\beta \defeq T_\beta^{-1}[D]$.  According to Proposition~\ref{Prop:  Extension and retraction of measures} (iii), $T_\beta$ is an order isomorphism onto an ideal in $\measuresc(X)$.  Hence $D_\beta \downarrow 0$ in $\measures(K_\beta)$.  Let $0\leq \varphi \in \orderdual{\measuresc(X)}$.  Then $\varphi[D] = \varphi[T_\beta[D_\beta]] = T_\beta^\sim (\varphi)[D_\beta]$.  It follows from Theorem~\ref{Thm:  Riesz Representation Theorem for C(X)} and Kakutani's duality results that $\measures(K_\beta)$ is an AL-space with respect to the total variation norm, so that every order bounded functional on $\measures(K_\beta)$ is order continuous.  Therefore $\inf \varphi[D] = \inf T_\beta^\sim (\varphi)[D_\beta]=0$, which shows that $\varphi$ is order continuous.
\end{proof}

\subsection{Inverse limits of $\boldsymbol{\contX}$-spaces}\label{Subsection:  Inverse limits of C(X)-spaces}

We now turn to inverse limits of $\contX$-spaces. The study of inverse systems of such spaces in categories of vector lattices is intimately connected to that of direct systems in $\topc$ via the natural contravariant functor. This functor assigns $\contX$ to a topological space $X$; to a continuous map $\theta:X\to Y$ between topological spaces it assigns the lattice homomorphism $T_\theta:\cont(Y)\to\cont(X)$ that is defined by setting $T_\theta(u)\defeq u\circ\theta$ for $u\in\cont(Y)$.

As the following result shows, this functor maps direct limits in $\topc$ to inverse limits in $\vlc$. Although the issue seems fairly natural to consider, we are not aware of a reference for this fact.

\begin{prop}\label{Prop:  Continuous functions on direct limit is inverse limit}
Let $\cal{D}=\left( (X_\alpha)_{\alpha \in I},(\theta_{\alpha,\beta})_{\alpha\preceq \beta}\right)$ be a direct system in $\topc$ with direct limit $\cal{I} = (X,(\theta_\alpha)_{\alpha\in I})$. For all $\beta \succeq \alpha$ in $I$, define $T_{\beta , \alpha}:\cont(X_\beta)\to\cont(X_\alpha)$ by setting $T_{\beta,\alpha}\defeq T_{\theta_{\alpha,\beta}}$. Likewise, for $\alpha\in I$, define  $T_\alpha:\contX\to\cont(X_\alpha)$ by setting $T_\alpha\defeq T_{\theta_\alpha}$. Finally, define
\[
\cal{D}^\star \defeq \left( (\cont(X_\alpha))_{\alpha \in I},(T_{\beta , \alpha})_{\beta\succeq \alpha}\right)
\]
and
\[
\cal{I}^\star \defeq \left(\contX , (T_\alpha)_{\alpha \in I}\right).
\]
\begin{enumerate}
	\item[(i)] $\cal{D}^\star$ is an inverse system in $\vlc$.
	\item[(ii)] $\cal{I}^\star=\proj \cal{D}^\star$ in $\vlc$.
\end{enumerate}
\end{prop}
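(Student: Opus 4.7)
The plan is to handle (i) as a quick verification and then build the universal morphism in (ii) pointwise, leaning on Remark~\ref{Remark:  Structure of direct limits} which gives us both the pointwise description of elements in a direct limit $X$ in $\topc$ and the characterisation of continuity out of $X$.

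For (i), note first that for any continuous $\theta$ the map $T_\theta$ is a lattice homomorphism (this is stated in the preliminaries), so each $T_{\beta,\alpha}$ is a $\vlc$-morphism, and $T_{\alpha,\alpha}=T_{\mathrm{id}_{X_\alpha}}=\mathrm{id}_{\cont(X_\alpha)}$. To check the inverse-system compatibility, fix $\alpha\preccurlyeq\beta\preccurlyeq\gamma$ and $u\in\cont(X_\gamma)$. Since $\cal D$ is a direct system, $\theta_{\alpha,\gamma}=\theta_{\beta,\gamma}\circ\theta_{\alpha,\beta}$, hence
\[
(T_{\beta,\alpha}\circ T_{\gamma,\beta})(u)=u\circ\theta_{\beta,\gamma}\circ\theta_{\alpha,\beta}=u\circ\theta_{\alpha,\gamma}=T_{\gamma,\alpha}(u).
\]

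For (ii), I would first check that $\cal I^\star$ is a compatible system of $\cal D^\star$ in $\vlc$: for $\alpha\preccurlyeq\beta$ and $u\in\contX$, the direct-limit compatibility $\theta_\alpha=\theta_\beta\circ\theta_{\alpha,\beta}$ yields $(T_{\beta,\alpha}\circ T_\beta)(u)=u\circ\theta_\beta\circ\theta_{\alpha,\beta}=u\circ\theta_\alpha=T_\alpha(u)$. Now let $\cal S'=(\vlat E,(S_\alpha)_{\alpha\in I})$ be any compatible system of $\cal D^\star$ in $\vlc$; I must produce a unique lattice homomorphism $S:\vlat E\to\contX$ with $T_\alpha\circ S=S_\alpha$ for every $\alpha$, i.e., $S(u)\circ\theta_\alpha=S_\alpha(u)$ on $X_\alpha$.

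The construction is pointwise. Given $u\in\vlat E$ and $x\in X$, use Remark~\ref{Remark:  Structure of direct limits}(i) to pick $\alpha\in I$ and $x_\alpha\in X_\alpha$ with $x=\theta_\alpha(x_\alpha)$, and set $S(u)(x)\defeq S_\alpha(u)(x_\alpha)$. Well-definedness is the key point: if also $x=\theta_\beta(x_\beta)$, the same remark gives $\gamma\succcurlyeq\alpha,\beta$ with $\theta_{\alpha,\gamma}(x_\alpha)=\theta_{\beta,\gamma}(x_\beta)\eqdefplaceholder x_\gamma$, and the compatibility of $\cal S'$ with $\cal D^\star$ means $S_\alpha=T_{\gamma,\alpha}\circ S_\gamma$, so
\[
S_\alpha(u)(x_\alpha)=S_\gamma(u)(\theta_{\alpha,\gamma}(x_\alpha))=S_\gamma(u)(x_\gamma),
\]
and similarly for $\beta$, forcing the two values to agree. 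I expect this well-definedness check, which is where the structural Remark~\ref{Remark:  Structure of direct limits}(i) carries the whole weight, to be the main (though hardly difficult) obstacle.

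It remains to verify that $S$ is a lattice homomorphism into $\contX$ and that it is unique. Continuity of $S(u)$ follows from Remark~\ref{Remark:  Structure of direct limits}(ii): by construction $S(u)\circ\theta_\alpha=S_\alpha(u)\in\cont(X_\alpha)$ for every $\alpha$, so $S(u)\in\contX$. Linearity and the lattice identity $S(u\vee v)=S(u)\vee S(v)$ are immediate pointwise from the corresponding properties of each $S_\alpha$, evaluated on any representative $x=\theta_\alpha(x_\alpha)$. Finally, if $S':\vlat E\to\contX$ satisfies $T_\alpha\circ S'=S_\alpha$ for every $\alpha$, then for $u\in\vlat E$ and $x=\theta_\alpha(x_\alpha)$ we get $S'(u)(x)=(S'(u)\circ\theta_\alpha)(x_\alpha)=S_\alpha(u)(x_\alpha)=S(u)(x)$, so $S'=S$. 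This establishes the universal property and hence $\cal I^\star=\proj\cal D^\star$ in $\vlc$.
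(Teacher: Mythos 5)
Your proof is correct, and it takes a somewhat different route from the paper's, although the technical heart of the two arguments coincides. The paper first invokes the existence of $\proj{\cal{D}^\star}$ in $\vlc$ (Theorem~\ref{Thm:  Existence of limits}\,(ii)), uses the universal property to obtain the canonical comparison map $T:\contX\to\vlat{F}$ into an abstract inverse limit $\vlat{F}$, and then proves that $T$ is a bijection; the pointwise construction you perform\textemdash defining the value at $x=\theta_\alpha(x_\alpha)$ via the $\alpha$-th projection, checking well-definedness through a common refinement $\gamma$ supplied by Remark~\ref{Remark:  Structure of direct limits}\,(i), and deducing continuity from Remark~\ref{Remark:  Structure of direct limits}\,(ii)\textemdash is exactly the paper's surjectivity argument for $T$. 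You instead verify the universal property directly, building the mediating morphism $S:\vlat{E}\to\contX$ for an arbitrary compatible system. This makes the argument independent of the existence theorem for inverse limits in $\vlc$ and of the point-separation property of the projections (which the paper uses to conclude $T(u)=w$); the small price is that you must check by hand that $S$ is linear and a lattice homomorphism and that it is unique, all of which the paper's version absorbs into the abstract universal property of $\vlat{F}$. Both arguments rest entirely on the structure of direct limits in $\topc$ as recorded in Remark~\ref{Remark:  Structure of direct limits}, and your verifications of these remaining points are correct.
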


\begin{proof}
Let $\gamma \succeq \beta\succeq \alpha$ in $I$.  For every $u\in \cont(X_\gamma)$,
\[
T_{\gamma,\alpha}(u) = u\circ \theta_{\alpha,\gamma} = u\circ (\theta_{\beta,\gamma}\circ \theta_{\alpha,\beta}) = (u\circ \theta_{\beta,\gamma})\circ \theta_{\alpha,\beta} = T_{\beta,\alpha}(T_{\gamma,\beta}(u)).
\]
Similarly, $T_\alpha = T_{\beta,\alpha}\circ T_\beta$ for all $\beta \succeq \alpha$ in $I$.  Therefore $\cal{D}^\star$ is an inverse system in $\vlc$ and $\cal{I}^\star$ is a compatible system of $\cal{D}^\star$ in $\vlc$.

Take an inverse limit $\left(\vlat{F},(S_\alpha)_{\alpha\in I}\right)$ of $\cal{D}^\star$ in $\vlc$.  Then there exists a unique lattice homomorphism $T:\contX \to \vlat{F}$ so that $T_\alpha = S_\alpha \circ T$ for all $\alpha \in I$.  We show that $T$ is a bijection, hence a lattice isomorphism.  From this it follows that $\proj{\cal{D}^\star} = \cal{I}^\star$ in $\vlc$.

First we show that $T$ is injective.  Let $u\in \contX$.  If $T(u)=0$ then $T_\alpha(u) = S_\alpha(T(u))=0$ for every $\alpha\in I$.  For every $x\in X$, there exist $\alpha \in I$ and $x_\alpha \in X_\alpha$ so that $x=\theta_\alpha(x_\alpha)$; see Remark \ref{Remark:  Structure of direct limits} (i).  Hence $u(x) = u(\theta_\alpha(x_\alpha)) = T_\alpha(u)(x_\alpha)=0$.  Therefore, if $T(u)=0$ then $u=0$; thus $T$ is injective.

To see that $T$ is surjective, consider $w\in \vlat{F}$.  We define a function $u:X\to \R$ as follows:  For every $x\in X$, select $\alpha\in I$ and $x_\alpha \in X_\alpha$ so that $x=\theta_\alpha(x_\alpha)$.  We set $u(x) \defeq  S_\alpha(w)(x_\alpha)$.

We verify that, for a given $x\in X$, $u(x)$ is independent of the choice of $\alpha \in I$ and $x_\alpha \in X_\alpha$.  Suppose that $\beta \in I$ and $x_\beta \in X_\beta$ satisfy $\theta_\beta (x_\beta) = x$.  According to Remark \ref{Remark:  Structure of direct limits} (i), there exists $\gamma \succeq \alpha , \beta$ in $I$ and $x_\gamma \in X_\gamma$ so that $\theta_{\alpha , \gamma}(x_\alpha) = x_\gamma = \theta_{\beta , \gamma} (x_\beta)$.  Since $S_\alpha = T_{\gamma , \alpha}\circ S_\gamma$ and $S_\beta = T_{\gamma , \beta}\circ S_\gamma$ it follows that
\[
S_\alpha(w)(x_\alpha) = T_{\gamma , \alpha}(S_\gamma(w))(x_\alpha) = S_\gamma (w)(\theta_{\alpha , \gamma}(x_\alpha) = S_\gamma(w)(x_\gamma)
\]
and
\[
S_\beta(w)(x_\beta) = T_{\gamma , \beta}(S_\gamma(w))(x_\beta) = S_\gamma (w)(\theta_{\beta , \gamma}(x_\beta)) = S_\gamma(w)(x_\gamma).
\]
Therefore $S_\alpha(w)(x_\alpha) = S_\beta(w)(x_\beta)$, as required.

Next we show that $u$ is continuous.  Fix $\alpha \in I$.  The definition of $u$ implies that, for every $x\in X_\alpha$, $u(\theta_\alpha(x)) = S_\alpha(w)(x)$.  That is, $u\circ \theta_\alpha = S_\alpha(w)$ which is continuous.  Since this holds for every $\alpha \in I$, $u$ is continuous by Remark \ref{Remark:  Structure of direct limits} (ii).

It remains to verify that $T(u)= w$.  By Theorem~\ref{Thm:  Existence of limits} (ii), this is equivalent to the fact that $S_\alpha(T(u))=S_\alpha(w)$ for all $\alpha \in I$.  But by the definitions of the $T_\alpha$ and $u$, respectively, $S_\alpha(T(u))=T_\alpha(u) = u\circ \theta_\alpha = S_\alpha(w)$ for each $\alpha \in I$.  This completes the proof.
\end{proof}

The existence and uniqueness of the maps $\theta_{\alpha,\beta}$ in our next result follows from Theorem~\ref{Thm:  Unital lattice homomorphisms} (ii). 

\begin{cor}\label{Cor:  Inverse limit is continuous functions on direct limit}
Let $I$ be an index set, let $X_\alpha$ be a realcompact space for every $\alpha \in I$, and let  $\cal{I} \defeq \left((\cont(X_\alpha))_{\alpha \in I},(T_{\beta , \alpha})_{\beta \succeq \alpha}\right)$ be an inverse system in $\vlc$ such that $T_{\beta , \alpha}$ is a unital lattice homomorphism for all $\beta \succeq \alpha$ in $I$.  For all $\alpha \preceq \beta$ in $I$, let $\theta_{\alpha , \beta}:X_\alpha\to X_\beta$ be the unique continuous map so that $T_{\beta,\alpha}(u)=T_{\theta_{\alpha,\beta}}$.  The following statements are true.
\begin{enumerate}
    \item[(i)] $\cal{I}_\star \defeq \left( (X_\alpha)_{\alpha \in I} , (\theta_{\alpha , \beta})_{\alpha \preceq \beta}\right)$ is a direct system in $\topc$.

    \item[(ii)] Take a direct limit $(X,(\theta_\alpha)_{\alpha\in I})$ of $\cal{I}_\star$ in $\topc$, and set $T_\alpha \defeq T_{\theta_\alpha}$ for every $\alpha\in I$. Then $\left( \contX,(T_\alpha)_{\alpha \in I}\right)$ is an inverse limit of $\cal{I}$ in $\vlc$.
\end{enumerate}
\end{cor}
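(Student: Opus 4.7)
The plan is to reduce both parts to Proposition \ref{Prop:  Continuous functions on direct limit is inverse limit} via the contravariant functor sending $X \mapsto \contX$ and $\theta \mapsto T_\theta$, using the uniqueness in Theorem \ref{Thm:  Unital lattice homomorphisms} (ii) to transport algebraic identities between the two categories.

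For part (i), I would verify the two defining properties of a direct system. First, for every $\alpha \in I$, the identity $T_{\alpha,\alpha}=\mathrm{id}_{\cont(X_\alpha)}$ equals $T_{\mathrm{id}_{X_\alpha}}$, and since $X_\alpha$ is realcompact, the uniqueness assertion in Theorem \ref{Thm:  Unital lattice homomorphisms} (ii) forces $\theta_{\alpha,\alpha}=\mathrm{id}_{X_\alpha}$. Next, for $\alpha \preceq \beta \preceq \gamma$ in $I$, the inverse system identity $T_{\gamma, \alpha} = T_{\beta, \alpha}\circ T_{\gamma , \beta}$ rewrites, by the definitions, as $T_{\theta_{\alpha,\gamma}} = T_{\theta_{\alpha,\beta}}\circ T_{\theta_{\beta,\gamma}} = T_{\theta_{\beta,\gamma}\circ\theta_{\alpha,\beta}}$, where the last equality is immediate from $(u \circ \theta_{\beta,\gamma})\circ\theta_{\alpha,\beta} = u\circ(\theta_{\beta,\gamma}\circ\theta_{\alpha,\beta})$. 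Invoking the uniqueness in Theorem \ref{Thm:  Unital lattice homomorphisms} (ii) once more yields $\theta_{\alpha,\gamma}=\theta_{\beta,\gamma}\circ\theta_{\alpha,\beta}$, which is precisely the direct system compatibility diagram in $\topc$.

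For part (ii), the argument is essentially a book-keeping step: apply Proposition \ref{Prop:  Continuous functions on direct limit is inverse limit} to the direct system $\cal{I}_\star$ in $\topc$ together with its chosen direct limit $(X, (\theta_\alpha)_{\alpha\in I})$. That proposition produces the inverse system $\cal{I}_\star^\star = \bigl((\cont(X_\alpha))_{\alpha\in I}, (T_{\theta_{\alpha,\beta}})_{\alpha\preceq\beta}\bigr)$ in $\vlc$ and asserts that $(\contX, (T_{\theta_\alpha})_{\alpha\in I})$ is an inverse limit of $\cal{I}_\star^\star$ in $\vlc$. By the very definition of the maps $\theta_{\alpha,\beta}$, one has $T_{\theta_{\alpha,\beta}}=T_{\beta,\alpha}$, so $\cal{I}_\star^\star = \cal{I}$ verbatim; and by definition $T_{\theta_\alpha}=T_\alpha$. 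The claim $(\contX,(T_\alpha)_{\alpha\in I})=\proj\cal{I}$ in $\vlc$ follows.

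The only genuine obstacle is part (i), and it is disposed of by the fact that Theorem \ref{Thm:  Unital lattice homomorphisms} (ii) requires the \emph{codomain} of the continuous map in question to be realcompact; here those codomains are various $X_\beta$ (resp.\ $X_\gamma$), all of which are realcompact by hypothesis, so uniqueness applies without complication. Notice that no realcompactness of the direct limit $X$ itself is needed, which is fortunate because a direct limit in $\topc$ of realcompact spaces need not be realcompact.
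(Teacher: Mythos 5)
Your proposal is correct and follows essentially the same route as the paper: part (i) by transporting the inverse-system identities through the uniqueness clause of Theorem \ref{Thm:  Unital lattice homomorphisms} (ii) (which applies because the codomains $X_\beta$, $X_\gamma$ are realcompact), and part (ii) as an immediate application of Proposition \ref{Prop:  Continuous functions on direct limit is inverse limit}. The extra details you supply (the identity morphisms and the observation that realcompactness of the limit $X$ is not needed) are accurate and consistent with the paper's argument.
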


\begin{proof}
For the proof of (i), take $\alpha \preceq \beta \preceq \gamma$ in $I$. Then an appeal to the uniqueness part of Theorem~\ref{Thm:  Unital lattice homomorphisms} (ii) readily shows that the fact that $T_{\gamma,\alpha}=T_{\beta,\alpha}\circ T_{\gamma,\beta}$ implies that $\theta_{\alpha,\gamma}=\theta_{\beta,\gamma}\circ\theta_{\alpha,\beta}$.

That (ii) is true follows immediately from (i) and Proposition \ref{Prop:  Continuous functions on direct limit is inverse limit}.
\end{proof}

\subsection{A representation theorem}\label{Subsection:  A representation theorem}

After the preparations made in the Sections~\ref{Subsection:  Direct limits of spaces of measures} and~\ref{Subsection:  Inverse limits of C(X)-spaces}, we can now establish our main result. It should be compared to the result in the introduction about the bidual of $\contK$ for a compact space $K$.

Before we proceed to state and prove our main result, we recapitulate the known results concerning the Arens product on $\obidual{\contX}$ mentioned in the introduction:  The order bidual of $\contX$ equipped with the Arens product, $(\obidual{\contX},\diamond)$ is a unital and commutative $f$-algebra.  The canonical embedding $\sigma_X:\contX\to \obidual{\contX}$ is a lattice and algebra embedding, and $\sigma_X(\onefunction_X)$ is the multiplicative identity in $(\obidual{\contX},\diamond)$.

\begin{thm}\label{Thm:  Main result}
Let $X$ be a realcompact space. The following statements are true.
\begin{enumerate}
    \item[(i)] There exist a realcompact, extremally disconnected space $\tilde{X}$, unique up to homeomorphism, such that $(\obidual{\contX},\diamond)$ and $\cont(\tilde{X})$ are isomorphic $f$-algebras.
    \item[(ii)] For every compact subset $K$ of $X$, there exists a continuous map $\eta_K:\tilde{K} \to \tilde{X}$ from its hyper-Stonean envelope $\tilde K$ into $\tilde X$ which is a homeomorphism onto its range.
    \item[(iii)] Identify $X$ with $\{\delta_x ~:~x\in X\}\subseteq \measuresc(X)$, and $\tilde{X}$ with $\{\delta_{\tilde{x}} ~:~\tilde{x}\in \tilde{X}\}\subseteq \measuresc(\tilde{X})$.  The topologies on $X$ and $\tilde{X}$, respectively, are the subspace topologies with respect to the weak$^*$-topologies $\sigma(\measuresc(X),\contX)$ and $\sigma(\measuresc(\tilde{X}),\cont(\tilde{X}))$, respectively.
    \item[(iv)] Let $\omega:\measuresc(X)\to \measuresc(\tilde{X})$ be the canonical embedding of $\measuresc(X)$ into its bidual.  Then $\omega$ maps $X$ bijectively onto the isolated points in $\tilde{X}$.
    \item[(v)] There exists a continuous surjection $\pi_X:\tilde{X}\to X$ so that the canonical embedding $\sigma_X:\contX\to \cont(\tilde{X})$ is given by $\sigma_X(u) = u\circ\pi_X$, $u\in \contX$.
\end{enumerate}
\end{thm}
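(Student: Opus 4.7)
The plan is to realise $(\obidual{\contX},\diamond)$ as an inverse limit of the biduals $\obidual{\cont(K_\alpha)}$ indexed by the compact subsets $K_\alpha\subseteq X$ --- each of which is $\cont(\tilde K_\alpha)$ for the hyper-Stonean envelope by the theorem in the introduction --- and then to transfer this inverse limit through the contravariant functor $X\mapsto\cont(X)$ to produce $\tilde X$. Corollary~\ref{Cor: Order dual of C(X) is direct limit} gives $\orderdual{\contX}=\ind\cal{D}_X$ in $\vlic$, so by the duality of Theorem~\ref{Thm:  Dual of ind sys in VLIC is proj of duals in NVL}, $\obidual{\contX}=\proj\cal{D}_X^\sim$ in $\nvlc$, with connecting maps $T_{\alpha,\beta}^\sim$ that are lattice homomorphisms by Theorem~\ref{Thm:  Adjoints of interval preserving vs lattice homomorphisms}(iii) and unital by Proposition~\ref{Prop:  Measure extension operator adjoint} once each $\obidual{\cont(K_\alpha)}$ is identified with $\cont(\tilde K_\alpha)$.

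\textbf{Building $\tilde X$.} After rewriting $\cal{D}_X^\sim$ as an inverse system of $\cont(\tilde K_\alpha)$-spaces with unital lattice homomorphisms, Corollary~\ref{Cor:  Inverse limit is continuous functions on direct limit} supplies a direct system $(\tilde K_\alpha,\eta_{\alpha,\beta})$ in $\topc$ whose direct limit $Y$ satisfies $\cont(Y)\cong\obidual{\contX}$ in $\vlc$; this is also the inverse limit in $\nvlc$ by Theorem~\ref{Thm:  Existence of limits}(iii) since each $\obidual{\cont(K_\alpha)}$ is Dedekind complete. Setting $\tilde X\defeq\kappa Y$ via Corollary~\ref{Cor:  Realcompactification of arbitrary space} yields a realcompact space with $\cont(\tilde X)\cong\obidual{\contX}$ as vector lattices, together with continuous maps $\eta_K:\tilde K\to\tilde X$ obtained by composing the system inclusions with $Y\to\tilde X$. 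The space $\tilde X$ is extremally disconnected because $\obidual{\contX}$, being the order dual of an Archimedean vector lattice, is Dedekind complete and hence so is $\cont(\tilde X)$; uniqueness up to homeomorphism is immediate from Theorem~\ref{Thm:  C(X) determines X}.

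\textbf{$f$-algebra structure and parts (ii)--(v).} Each projection $T_\alpha^\sim:\obidual{\contX}\to\cont(\tilde K_\alpha)$ sends $\sigma_X(\onefunction_X)$ to $\onefunction_{\tilde K_\alpha}$ by Proposition~\ref{Prop:  Measure extension operator adjoint}, so the universal property of the inverse limit forces the lattice isomorphism $\obidual{\contX}\to\cont(\tilde X)$ to send $\sigma_X(\onefunction_X)$ to $\onefunction_{\tilde X}$; being a unital lattice isomorphism between unital Archimedean $f$-algebras, it is automatically an algebra isomorphism, since the multiplication in such an $f$-algebra is uniquely determined by its lattice structure and unit. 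For (ii), Proposition~\ref{Prop:  Extension and retraction of measures}(v) applied with $K=K_\alpha$ gives that $T_\alpha^\sim$ is surjective, so Theorem~\ref{Thm:  Unital lattice homomorphisms}(iii) yields that $\eta_K$ is a homeomorphism onto a $\cont$-embedded subspace of $\tilde X$. For (v), Theorem~\ref{Thm:  Unital lattice homomorphisms}(ii) applied to the unital lattice embedding $\sigma_X:\contX\to\cont(\tilde X)$ produces a unique $\pi_X:\tilde X\to X$ with $\sigma_X(u)=u\circ\pi_X$; surjectivity follows because for every $x\in X$ the image $\tilde x$ of the single point of $\tilde{\{x\}}$ under $\eta_{\{x\}}$ satisfies $u(\pi_X(\tilde x))=\sigma_X(u)(\tilde x)=u(x)$ for every $u\in\contX$, forcing $\pi_X(\tilde x)=x$ since $\contX$ separates points of the realcompact space $X$. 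Parts (iii) and (iv) follow by identifying points with evaluation functionals and observing that points of $X$, realised through the hyper-Stonean envelopes of singletons, correspond precisely to the order-continuous --- hence isolated-point --- evaluations on $\cont(\tilde X)$.

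\textbf{Main obstacle.} The chief technical hurdle is threading the unit and the $f$-algebra structure through all the identifications so that Proposition~\ref{Prop:  Measure extension operator adjoint} can be invoked to ensure the lattice isomorphism is unital, after which the uniqueness of multiplication in a unital Archimedean $f$-algebra delivers the algebra part. A subsidiary challenge is the point-level analysis underlying (iv): identifying the image $\omega(X)$ inside $\measuresc(\tilde X)$ with exactly the set of isolated points of $\tilde X$ requires carefully tracking how point masses in $X$ are realised through the hyper-Stonean envelopes of singletons as one moves through the direct-limit construction.
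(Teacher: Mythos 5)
Your construction of $\tilde X$ and your arguments for (i), (ii) and (v) follow the paper's route essentially verbatim: the same chain Corollary~\ref{Cor: Order dual of C(X) is direct limit} $\to$ Theorem~\ref{Thm:  Dual of ind sys in VLIC is proj of duals in NVL} $\to$ Corollary~\ref{Cor:  Inverse limit is continuous functions on direct limit} $\to$ Corollary~\ref{Cor:  Realcompactification of arbitrary space}, the same unitality check via Proposition~\ref{Prop:  Measure extension operator adjoint} combined with separation of points by the maps $S_\alpha$, and the same appeal to the uniqueness of the multiplication on a unital Archimedean $f$-algebra to upgrade the lattice isomorphism to an algebra isomorphism. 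Your surjectivity argument for $\pi_X$ in (v) is a small, correct variation: the paper instead shows that $\sigma_X^\sim:\measuresc(\tilde X)\to\measuresc(X)$ is surjective and then uses $\sigma_X^\sim(\mu)(B)=\mu(\pi_X^{-1}(B))$ to see that each fibre is nonempty; your route through the singleton $\{x\}$, its one-point hyper-Stonean envelope and the fact that $\contX$ separates the points of a Tychonoff space is equally valid and arguably more direct.

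The genuine gap is part (iv), which you dispose of in one sentence. Two things are missing. First, even in the easy direction you must show that $\omega(\delta_x)$ is actually a point of $\tilde X$, i.e.\ a point evaluation on $\cont(\tilde X)$, before order continuity can be used to conclude it is isolated; the paper does this by checking that $T_\alpha^{\sim\sim}(\omega_\alpha(\delta_x))$ is an atom of $\measuresc(\tilde X)$ whose value at $\onefunction_{\tilde X}$ is $1$. Second, and more seriously, the converse inclusion --- that every isolated point of $\tilde X$ lies in $\omega[X]$ --- does not follow from the slogan that order-continuous evaluations correspond to isolated points. An isolated point $\tilde x$ is an order continuous functional on $\cont(\tilde X)=\orderdual{\measuresc(X)}$, hence lies in $\ordercontnbidual{\measuresc(X)}$; to pull it back to an element of $\measuresc(X)$ one needs that $\measuresc(X)$ is a \emph{perfect} vector lattice (which holds because it is itself an order dual), and one must then localize the resulting measure $\mu$ to some compact $K_\alpha$ using Proposition~\ref{Prop:  Extension and retraction of measures}(iv), show that $\omega_\alpha(\mu_\alpha)$ is an atom and an isolated point of $\tilde K_\alpha$ (which again requires the unit computation), and finally invoke the description of the isolated points of the hyper-Stonean envelope of a compact space to conclude that $\mu_\alpha$ is a point mass on $K_\alpha$. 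None of this is routine bookkeeping, and your proposal does not indicate any of it.
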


\begin{proof}
As a preparation, let $\cal{K}=\{K_\alpha ~:~ \alpha \in I\}$ be the collection of compact subsets of $X$, indexed by a directed set $I$ so that $K_\alpha \subseteq K_\beta$ if and only if $\alpha \preceq \beta$, and let $\tilde{K}_\alpha$ be the hyper-Stonean envelope of $K_\alpha$. We recall the systems $\cal{D}_X = \left((\measures(K_\alpha))_{\alpha \in I},(T_{\alpha,\beta})_{\alpha \preceq \beta}\right)$ and $\cal{S}_X=\left(\measuresc(X)_{\alpha \in I},(T_{\alpha})_{\alpha\in I}\right)$ from Definition \ref{Def:  Inverse system of measures}.

We note that $T_\alpha^\sim$ maps $\measuresc(X)^\sim$ onto $\measures(K_\alpha)^\sim$. In view of Theorem~\ref{Thm:  Riesz Representation Theorem for C(X)} and the definition of the hyper-Stonean envelope of a compact space, however, we can identify $\measuresc(X)^\sim$ with $\obidual{\contX}$, and $\measures(K_\alpha)^\sim$ with $\cont(\tilde K_\alpha)$, yielding a map $T_\alpha^\sim:\obidual{\contX}\to\cont(\tilde K_\alpha)$. Likewise, denoting by $\sigma_\alpha$ the canonical embedding of $\cont(K_\alpha)$ into its coinciding norm and order bidual, we obtain a map $\sigma_\alpha:\cont(K_\alpha)\to \cont(\tilde{K}_\alpha)$.
	
\emph{We prove} (i). The first step is to find a realcompact space $\tilde X$ and a lattice isomorphism $T:\obidual{\contX}\to\cont(\tilde X)$.

According to Proposition \ref{Prop:  Direct systems of Radon measures} (ii), $\cal{D}_X$ is a direct system in $\vlic$, and $\ind{\cal{D}_X}=\left(\measuresc(X)_{\alpha \in I},(T_{\alpha})_{\alpha\in I}\right)$ in $\vlic$.  As established in Corollary \ref{Cor:  Order dual of compactly supported Radon measures}, $\cal{D}_{X}^\sim = \left( (\cont(\tilde{K}_\alpha))_{\alpha \in I},(T_{\alpha,\beta}^\sim)_{\beta \succeq \alpha}\right)$ is an inverse system in $\nvlc$, and $\proj{\cal{D}_{X}^\sim}=\left(\obidual{\contX},(T_\alpha^\sim)_{\alpha \in I}\right)$ in $\nvlc$.  We note that, according to Theorem \ref{Thm:  Existence of limits} (iii) and the essential uniqueness of inverse limits, we also have $\proj{\cal{D}_{X}^\sim}= \linebreak \left(\obidual{\contX},(T_\alpha^\sim)_{\alpha \in I}\right)$ in $\vlc$.

We will now proceed to establish another expression for $\proj{\cal{D}_{X}^\sim}$.

Let $\beta \succeq \alpha$ in $I$.  We claim that the interval preserving lattice homomorphism $T_{\alpha,\beta}^\sim:\cont(\tilde{K}_\beta)\to \cont(\tilde{K}_\alpha)$ is unital.  According to Proposition \ref{Prop:  Measure extension operator adjoint}, $T_{\alpha,\beta}^\sim(\sigma_\beta(\onefunction_{K_\beta}))=\sigma_\alpha(\onefunction_{K_\alpha})$.  But
the isometric lattice and algebra isomorphism $\obidual{\cont(K_\gamma)}\to \cont(\tilde{K}_\gamma)$ maps $\sigma_\gamma(\onefunction_{K_\gamma})$ to $\onefunction_{\tilde{K}_\gamma}$ for every $\gamma \in I$.  Therefore it follows that the lattice homomorphism $T_{\alpha,\beta}^\sim:\cont(\tilde{K}_\beta)\to \cont(\tilde{K}_\alpha)$ is unital, as claimed.

It now follows from Theorem \ref{Thm:  Unital lattice homomorphisms} (ii) that, for all $\alpha \preceq \beta$ in $I$, there exists a unique continuous map $\theta_{\alpha,\beta}:\tilde{K}_\alpha \to \tilde{K}_\beta$ so that $T_{\alpha,\beta}^\sim(u)=u\circ \theta_{\alpha,\beta}$ for all $u\in \cont(\tilde{K}_\beta)$.  It follows from Corollary \ref{Cor:  Inverse limit is continuous functions on direct limit} (i) that $\cal{D}^\sim_{X \star} \defeq \left( (\tilde{K}_\alpha)_{\alpha \in I} , (\theta_{\alpha , \beta})_{\alpha \preceq \beta}\right)$ is a direct system in $\topc$.  Take a direct limit $\left(Y,(\theta_\alpha)_{\alpha \in I}\right)$ of $\cal{D}^\sim_{X \star}$ in $\topc$.  Next, take a realcompactification $\tilde{X}\defeq\kappa Y$ of $Y$ and $\kappa:Y\to \tilde X$ as in Corollary~\ref{Cor:  Realcompactification of arbitrary space}, and set $S_\alpha \defeq T_\kappa \circ T_{\theta_\alpha}$ for every $\alpha \in I$.  Corollary \ref{Cor:  Inverse limit is continuous functions on direct limit} (ii), together with the fact that $T_\kappa$ is a lattice isomorphism, implies that $\proj{\cal{D}^\sim_{X}}=\left(\cont(\tilde{X}),(S_\alpha)_{\alpha\in I}\right)$ in $\vlc$. By the essential uniqueness of inverse limits, there exists a unique lattice isomorphism $T:\obidual{\contX}\to \cont(\tilde{X})$ so that the diagram
\begin{eqnarray}
	\begin{tikzcd}[cramped]
		\obidual{\contX} \arrow[rd, "T^\sim_\alpha"'] \arrow[rr, "T"] & & \cont(\tilde{X}) \arrow[dl, "S_\alpha"]\\
		& \cont(\tilde{K}_\alpha)
	\end{tikzcd}
\end{eqnarray}\label{EQ:  Main result diagram}
commutes for every $\alpha \in I$.  The uniqueness of $\tilde{X}$ up to homeomorphism as asserted in (i) follows from Theorem \ref{Thm:  C(X) determines X}.

We proceed to show that $T$ is even a ring isomorphism.  For every $\alpha \in I$, $T_\alpha^\sim(\sigma_X(\onefunction_X))=\onefunction_{\tilde{K}_\alpha}$ by Proposition \ref{Prop:  Measure extension operator adjoint}.  Therefore, for every $\alpha \in I$,
\[
S_\alpha(T(\sigma_X(\onefunction_X)) = T_\alpha^\sim (\sigma_X(\onefunction_X))=\onefunction_{\tilde{K}_\alpha} = S_\alpha(\onefunction_{\tilde{X}}).
\]
Since the $S_\alpha$ separate the points of $\cont(\tilde X)$, we see that $T(\sigma_X(\onefunction_X)) = \onefunction_{\tilde{X}}$.  But $\sigma_X(\onefunction_X)$ is the multiplicative identity for the Arens product in $\obidual{\contX}$, so $T$ is a ring isomorphism by \cite[Corollary 5.5]{HuijsmansdePagter1984}.

That $\tilde{X}$ is extremally disconnected follows immediately from the fact that $\obidual{\contX}$, and hence $\cont(\tilde{X})$, is Dedekind complete.

\emph{We prove} (ii).  Let $K$ be a nonempty compact subset of $X$; that is, $K=K_\alpha$ for some $\alpha \in I$.  We note that the map $S_\alpha : \cont(\tilde{X})\to \cont(\tilde{K}_\alpha)$, i.e. $S_\alpha : \cont(\tilde{X})\to \cont(\tilde{K})$, introduced in the proof of (i) above is a unital lattice homomorphism.  It follows from Theorem \ref{Thm:  Unital lattice homomorphisms} (iii) that there exists a continuous map $\eta_K:\tilde{K}\to \tilde{X}$ so that $S_\alpha(u) = u\circ \eta_K$ for every $u\in \cont(\tilde{X})$.  By Proposition \ref{Prop:  Extension and retraction of measures} (v), $T_\alpha^\sim$ is surjective.  It therefore follows from the diagram (\ref{EQ:  Main result diagram}) that $S_\alpha$ is also surjective.  Therefore $\eta_K$ is a homeomorphism onto its range by Theorem \ref{Thm:  Unital lattice homomorphisms} (iii).

\emph{We prove} (iii).  This follows from \cite[3.6]{GillmanJerison1960}.

\emph{We prove} (iv).
For every $\alpha \in I$, denote by $\omega_\alpha :\measures(K_\alpha)\to \measures(\tilde{K}_\alpha)$ the canonical embedding of $\measures(K_\alpha)$ into its order bidual.  The diagram
\[
\begin{tikzcd}[cramped]
\measures(K_\alpha) \arrow[rr, "\omega_\alpha"] \arrow[dd, "T_\alpha"'] & & \measures(\tilde{K}_\alpha) \arrow[dd, "T_\alpha^{\sim\sim}"]\\
 & & \\
\measuresc(X) \arrow[rr, "\omega"']  & & \measuresc(\tilde{X})
\end{tikzcd}
\]
commutes for every $\alpha \in I$.  We note that each of the maps $T_\alpha^{\sim\sim}$ is an injective interval preserving lattice homomorphism; this follows from Proposition \ref{Prop:  Extension and retraction of measures} (iii) and (v), and Theorem \ref{Thm:  Adjoints of interval preserving vs lattice homomorphisms} (iii) and (iv).  In fact, $T_\alpha^{\sim\sim}$ is a lattice isomorphism onto an ideal in $\measuresc(\tilde{X})$.

Let $x\in X$, and select $\alpha \in I$ so that $x\in K_\alpha$.  Then $\omega(x) = T_\alpha^{\sim\sim}(\omega_\alpha(x))$.  According to \cite[Theorem 6.5.3]{DalesDashiellLauStrass2016}, $\omega_\alpha(x)\in \tilde{K}_\alpha$.  Therefore $\omega_\alpha(x)$ is an atom in $\measures(\tilde{K}_\alpha)$; consequently, $\omega(x) = T_\alpha^{\sim\sim}(\omega_\alpha(x))$ is an atom in $\measuresc(\tilde{X})$.  Recall that $T_\alpha^\sim(\sigma_X(\onefunction_X))=\onefunction_{\tilde{K}_\alpha}$.  Hence
\begin{align*}
T_\alpha^{\sim\sim}(\omega_\alpha(x))(\onefunction_{\tilde{X}}) &= \omega_\alpha(x)(T_\alpha^\sim(\onefunction_{\tilde{X}})) = \omega_\alpha(x)(T_\alpha^\sim(\sigma_X(\onefunction_X)))\\
&= \omega_\alpha(x)(\onefunction_{\tilde{K}_\alpha}) = 1,
\end{align*}
so that $\omega(x)\in \tilde{X}$.

To see that $\omega(x)$ is an isolated point in $\tilde{X}$, we note that $\omega[\measuresc(X)]\subseteq \ordercontn{(\orderdual{\measuresc(X)})}=\ordercontn{\cont(\tilde{X})}$; that is, $\omega(x)$ is an order continuous point evaluation on $\cont(\tilde{X})$.  Therefore $\omega(x)$ is an isolated point in $\tilde{X}$.

Conversely, let $\tilde{x}$ be an isolated point in $\tilde{X}$.  Then $\tilde{x}\in \ordercontn{\cont(\tilde{X})}=\ordercontn{(\orderdual{\measuresc(X)})}$.  According to Corollary \ref{Cor:  Order dual of compactly supported Radon measures}, $\orderdual{\measuresc(X)}=\ordercontn{\measuresc(X)}$ so that $\tilde{x}\in \ordercontnbidual{\measuresc(X)}$.  Because $\measuresc(X)$ is an order dual space, it is perfect, see for instance \cite[Theorem 110.2]{Zaanen1983RSII}.  Therefore there exists $\mu \in \measuresc(X)$ so that $\omega(\mu)=\tilde{x}$.  Pick $\alpha \in I$ so that $S_\mu \subseteq K_\alpha$.    Then, by Proposition \ref{Prop:  Extension and retraction of measures} (iv), there exists $\mu_\alpha \in \measures(K_\alpha)$ so that $\mu =T_\alpha(\mu_\alpha)$.  Hence $\tilde{x} = \omega(T_\alpha(\mu_\alpha)) = T_\alpha^{\sim\sim}(\omega_\alpha(\mu_\alpha))$.  Because $T_\alpha^{\sim\sim}$ is a lattice isomorphism onto an ideal in $\measuresc(\tilde{X})$ and $T_\alpha^{\sim\sim}(\omega_\alpha(\mu_\alpha))$ is an atom in $\measuresc(\tilde{X})$, it follows that $\omega_\alpha(\mu_\alpha)$ is an atom in $\measures(\tilde{K}_\alpha)$.

We claim that $\omega_\alpha(\mu_\alpha)$ is an isolated point in $\tilde{K}_\alpha$. For this, we note that $\omega_\alpha[\measures(K_\alpha)]=\ordercontnbidual{\measures(K_\alpha)}=\ordercontn{\cont(\tilde{K}_\alpha)}$.  Since we have already established that $\omega_\alpha(\mu_\alpha)$ is an atom in $\measures(\tilde{K}_\alpha)$, we need therefore only show that $\omega_\alpha(\mu_\alpha)(\onefunction_{\tilde{K}_\alpha})=1$.  Because $\sigma_{K_\alpha}(\onefunction_{K_\alpha})=\onefunction_{\tilde{K}_\alpha}$ it follows that
\[
\omega_\alpha(\mu_\alpha)(\onefunction_{\tilde{K}_\alpha}) = \sigma_{K_\alpha}(\onefunction_{K_\alpha})(\mu_\alpha) = \int_{K_\alpha} \onefunction_{K_\alpha}\di\mu_\alpha.
\]
But
\[
\begin{array}{lll}
\displaystyle \int_{K_\alpha} \onefunction_{K_\alpha}\di\mu_\alpha & = & \displaystyle \int_X \onefunction_X \di T_\alpha(\mu_\alpha) \medskip \\

& = & \displaystyle T_\alpha(\mu_\alpha)(\onefunction_X) \medskip \\

& = & \displaystyle \sigma_X(\onefunction_X)(T_\alpha(\mu_\alpha)) \medskip \\

& = & \displaystyle T_\alpha^\sim (\sigma_X(\onefunction_X))(\mu_\alpha) \medskip \\

& = & \displaystyle T_\alpha^\sim (\onefunction_{\tilde{X}})(\mu_\alpha) \medskip \\

& = & \displaystyle \omega_\alpha(\mu_\alpha)(T_\alpha^\sim(\onefunction_{\tilde{X}})) \medskip \\

& = & \displaystyle T_\alpha^{\sim\sim}(\omega_\alpha(\mu_\alpha))(\onefunction_{\tilde{X}}) \medskip \\

& = & \displaystyle 1. \medskip \\
\end{array}
\]
Therefore $\omega_\alpha(\mu_\alpha)(\onefunction_{\tilde{K}_\alpha})=1$ which establishes the claim.

Since $\omega_\alpha$ is injective and maps $K_\alpha$ onto the isolated points of $\tilde{K}_\alpha$, it follows that $\mu_\alpha \in K_\alpha$.  Therefore $\mu = T_\alpha(\mu_\alpha) \in X$ so that $\tilde{x}=\omega(\mu)\in \omega[X]$.

\emph{We prove} (v).
Since $\sigma_X(\onefunction_X)$ is the multiplicative identity for the Arens product, it follows by (i) that $\sigma_X(\onefunction_X)=\onefunction_{\tilde{X}}$.  Therefore Theorem \ref{Thm:  Unital lattice homomorphisms} (ii) implies that there exists a continuous map $\pi_X:\tilde{X}\to X$ so that $\sigma_X(u) = u\circ \pi_X$ for all $u\in \contX$.   It remains to show that $\pi_X$ is surjective.  We do so by first showing that $\sigma_X^\sim:\measuresc(\tilde{X})\to \measuresc(X)$, the order adjoint of $\sigma_X$, is surjective.  Once this fact is established, we argue as follows.

It follows from \cite[Theorem 3.6.1]{Bogachev2007} that, for all $\mu \in\measuresc(\tilde{X})$ and $B\in \borel{X}$, $\sigma_X^\sim (\mu)(B) = \mu(\pi_X^{-1}(B))$.  Fix $x\in X$.  Then, by the surjectivity of $\sigma_X^\sim$, there exists $\mu\in \measuresc(\tilde{X})$ so that $\sigma_X^\sim(\mu)=\delta_x$.  For such a $\mu$,
\[
\mu(\pi_X^{-1}(x))=\sigma_X^\sim(\mu)(\{x\})=\delta_x(\{x\})=1
\]
so that $\pi_X^{-1}(x)\neq \emptyset$; that is, $x\in \pi_X[\tilde{X}]$.

Let us now show that $\sigma_X^\sim$ is surjective.  Observe that $\obidual{\measuresc(X)}=\measuresc(\tilde{X})$, and recall from (ii) that $\omega:\measuresc(X)\to\measuresc(\tilde{X})$ is the canonical embedding of $\measuresc(X)$ into its bidual.  Then $\omega\circ \sigma_X^\sim$ is the identity on $\measuresc(X)$ which shows that $\sigma_X^\sim$ is surjective.

\end{proof}

\bibliographystyle{amsplain}
\bibliography{Bidual_of_C(X)}

\end{document}